\newtheorem{thm}{Theorem}[section]
\newtheorem{cor}[thm]{Corollary}
\newtheorem{prop}[thm]{Proposition}
\theoremstyle{definition}
\newtheorem{exm}[thm]{Example}
\theoremstyle{Lemma}
\newtheorem{lem}[thm]{Lemma}
\newcommand{\U}{\mathcal{U}}
\begin{document}

 %\author{Behrooz Fadaee}
%\address{Department of
	%Mathematics, University of Kurdistan, P. O. Box 416, Sanandaj,
	%Iran.}

%

%\author[Hoger Ghahramani]{ Hoger Ghahramani}

%\address{Department of Mathematics, University of Kurdistan, P.O. Box 416, Sanandaj, Iran.}
%\email{hoger.ghahramani@yahoo.com; h.ghahramani@uok.ac.ir}

%\thanks{{\scriptsize
%\hskip -0.4 true cm \emph{MSC(2020)}: 16W25; 16W10; 47B47.

%\newline \emph{Keywords}: Lie triple centralizer, generalized Lie triple drivation, von Nemman algebra .\\}}

\title[On Lie $n$-centralizers, $n$-commuting linear maps and related mappings ]
{On Lie $n$-centralizers, $n$-commuting linear maps and related mappings of algebras }

\author[Hoger Ghahramani]{ Hoger Ghahramani}
\address{Department of Mathematics, Faculty of Science, University of Kurdistan, P.O. Box 416, Sanandaj, Kurdistan, Iran.}
\email{h.ghahramani@uok.ac.ir; hoger.ghahramani@yahoo.com}

\thanks{{\scriptsize
\hskip -0.4 true cm \emph{MSC(2020)}:16W10; 16W25; 47L10; 47A63; 16R60. 
\newline \emph{Keywords}: Lie $n$-centralizer, $n$-commuting map, associative algebra, standard operator algebra. \\}}

 \email{}
\email{}

 \address{}

 \email{}

 \thanks{}

 \thanks{}

 \subjclass{}

 \keywords{}

 \date{}

 \dedicatory{}

 \commby{}

 %%% ----------------------------------------------------------------------

 \begin{abstract}
Let $\U$ be an algebra and for $n\geq 2$, 
\[p_n(a_1, a_2,\cdots , a_n )   =[\cdots [[a_1,a_2], a_3],\cdots,a_n]\] 
be the $(n - 1)$-th commutator of $a_1,\cdots ,a_n\in \U$. The linear mapping $\phi:\U \rightarrow \U$ is said to be a Lie $n$-centralizer if 
\[\phi(p_n(a_1, a_2,\cdots , a_n ))=p_n(\phi(a_1), a_2,\cdots , a_n )\] 
for all $a_1, a_2,\cdots , a_n\in \U$, and the mapping $\varphi:\U \rightarrow \U$ is called an $n$-commuting map if 
\[p_n(\varphi(a), a,\cdots , a)=0\] 
for all $a\in \U$. In this article, we consider several local conditions under which linear mappings on algebras act like Lie $n$-centralizers and we study these linear mappings, Lie $n$-centralizers and $n$-commuting linear maps. With various examples, we compare Lie $n$-centralizers, $n$-commuting linear maps, and linear mappings acting similarly to Lie $n$-centralizers under different local conditions. In the following, we consider the mentioned concepts for linear mappings on standard operator algebras over a complex Banach space $\mathcal{X}$ with $dim\mathcal{X} \geq 2$. We show that the linear mappings on the unital standard operator algebras valid in the considered local conditions are $n$-commuting map, and also, we show that some of the considered local conditions are equivalent for linear mappings on unital standard operator algebras and we determine their structure on these algebras. We describe linear mappings on $B(\mathcal{H})$ where $\mathcal{H}$ is a complex Hilbert space with $dim \mathcal{H}\geq2$ or continuous linear mappings with strong operator topology on unital standard operator algebras under considered local conditions. In addition, Lie $n$-centralizers are characterized on (non-unital or unital) standard operator algebras. As an application of the obtained results, we present some variants of Posner's second theorem on unital standard operator algebras.
\end{abstract}

 %%% ----------------------------------------------------------------------
\maketitle
%%% -------- --------------------------------------------------------------

 \section{\bf Introduction}
The study of mappings related to the Lie structure of associative algebras is one of the topics of interest. Among these mappings, we can mention the Lie centralizer and its generalizations. Let $\mathcal{U}$ be an algebra. The linear map $\phi:\U \rightarrow \U$ is said to be a \textit{Lie centralizer} if $\phi([a,b])=[\phi(a),b]$ for all $a,b\in \U$, where $[a,b]=ab-ba$ is the Lie product of $a$ and $b$. It is easily checked that $\phi$ is a Lie centralizer on $\U$ if and only if $\phi([a,b])=[a,\phi(b)]$ for all $a,b\in \U$. The notion of Lie centralizer is a classical notion in the theory of Lie algebras and other nonassociative algebras (see, \cite{jac, mc}). We are only interested in the case where algebras are associative algebras. Lie centralizers have recently been widely studied from different perspectives on algebras. These studies are aimed at determining the structure of Lie centralizers or characterizing mappings that act at certain products such as Lie centralizers. These study directions are not only interesting on their own, but also appear in connection with other study directions, which we mention a few of them. In \cite{bre}, Bre\v{s}ar has characterized continuous commuting linear maps on a class of Banach algebras in terms of Lie centralizers and then asked about determining the structure of Lie centralizers. In \cite{mag}, in connection with the study of relative commutants of finite groups of unitary operators, Lie centralizers have appeared and the author has characterized the bounded Lie centralizers from a certain type of von Neumann algebras into certain bimodules. The author in \cite{gh hoger} studies commutant or double commutant preserving maps on nest algebras. Indeed, the main results of \cite{gh hoger} come from the characterization of Lie centralizers through two-sided zero products. Lie centralizers also appear in the study of generalized Lie derivations, for which see \cite{beh, goo} and references therein.
\par 
Now, in the following, we will state some of the results obtained for Lie centralizers in line with the mentioned research directions. Fo\v{s}ner and Jing in \cite{fo} have studied non-additive Lie centralizers on triangular rings. Also, in \cite{liu2} non-linear Lie centralizers on generalized matrix algebra have been characterized. In \cite{jab} it has been shown that under some conditions on a unital generalized matrix algebra $\mathcal{G}$, if $\phi:\mathcal{G}\rightarrow \mathcal{G}$ is a linear Lie centralizer, then $\phi(a)=\lambda a+\mu (a) $ in which $\lambda\in \mathrm{Z}(\mathcal{G})$ and $\mu$ is a linear map from $\mathcal{G}$ into $\mathrm{Z}(\mathcal{G})$ vanishing at commutators. Ghimire \cite{ghi} has investigated the linear Lie centralizers of the algebra of dominant block upper triangular matrices. Also, see \cite{bah, ghi0}. In some papers, the following condition on a linear mapping $\phi:\U \rightarrow \U$ is considered
\[ ab=0 \Rightarrow \phi ( [ a,b] ) = [ \phi (a) , b ] = [ a, \phi(b) ]  \quad \mathbf{(C)}.\]
 for $a,b \in \U$. The authors in \cite{beh} have studied the property $\mathbf{(C)}$ for linear mappings on non-unital triangular algebras. In \cite{fad3}, linear mappings satisfying $\mathbf{(C)}$ on a 2-torsion free unital generalized matrix algebra under some mild conditions, are described. The result \cite[Corollary 5.1]{fad3} states that if $\U$ is a unital standard operator algebra on a complex Banach space $\mathcal{X}$, then the linear mapping $ \phi : \mathcal{U} \to \mathcal{U} $ satisfies $\mathbf{(C)}$ if and only if $ \phi (A) = \lambda A + \mu (A)I $ for any $A\in \mathcal{U}$, where $ \lambda \in \mathbb{C} $ (the field of complex numbers) and $\mu: \mathcal{U}\rightarrow \mathbb{C}$ is a linear mapping in which $ \mu ( [ A,B ] ) =0 $, for any $ A,B \in \mathcal{U} $ with $ AB = 0 $. Fo\v{s}ner et al. \cite{fos2} have considered the additive mappings satisfying $\mathbf{(C)}$ on a unital prime ring of characteristic not 2 with a non-trivial idempotent, and obtained similar results. It has also been studied for mappings on some algebras (rings), similar to property $\mathbf{(C)}$ at other products, see \cite{fad2, gh4, goo, liu3}. Ghahramani and Jing \cite{gh-jing} introduced the following concept. A linear mapping $\phi:\U \rightarrow \U$ is said to satisfy condition $\mathbf{(C_1)}$ or $\mathbf{(C_2)}$ if 
 \[ ab=0 \Rightarrow \phi ( [ a,b] ) = [ \phi (a) , b ]   \quad \mathbf{(C_1)};\]
 or 
 \[  ab=0 \Rightarrow \phi ( [ a,b] ) =  [ a, \phi(b) ]  \quad \mathbf{(C_2)}\]
for $a,b \in \U$. The conditions $\mathbf{(C_1)}$ or $\mathbf{(C_2)}$ of linear mappings is weaker than the condition of being a Lie centralizer (see \cite[Example 2.4]{gh-jing}) or satisfying the condition $\mathbf{(C)}$ (see \cite[Example 2.2]{gh-jing}). In \cite{gh-jing}, with various examples, linear mappings that satisfy the conditions $\mathbf{(C_1)}$ or $\mathbf{(C_2)}$, Lie centralizers and commuting linear maps have been compared. A mapping $\varphi : \U\rightarrow \U$ is called \textit{commuting map} if $[\varphi(a),a]=0$ for all $a\in \U$. A commuting map $\varphi : \U\rightarrow \U$ is called \textit{proper} if it is of the form $\varphi(a)=\lambda a+\mu(a)$ for all $a\in\U$, where $\lambda\in \mathrm{Z}(\U)$ and $\mu$ is a linear mapping from $\U$ into $\mathrm{Z}(\U)$. Here $\mathrm{Z}(\U)$ represents the center of $\U$.  The commuting maps have played a crucial role in the development of the theory of functional identities (see \cite{bre0}). The first important result on commuting maps is the Posner’s second theorem \cite{posner} which says that the existence of a nonzero commuting derivation on a prime ring $\mathcal{A}$ implies that A is commutative. A number of authors have extended Posner’s second theorem in several ways. They have showed that nonzero derivations can not be commuting on various subsets on non-commutative prime rings. In \cite{bre 1993} on prime rings commuting additive maps with no further assumptions  have been characterized. We encourage the reader to read the well-written survey paper \cite{bre 2004}, in which the author presented the development of the theory of commuting maps and their applications in detail. It is clear that any Lie centralizer on an algebra is a commuting linear map, but the converse is not necessarily true (see \cite[Example 2.3]{gh-jing}). Characterization of commuting linear maps in terms of Lie centralizers is also one of the issues of interest, as we mentioned, in \cite[Theorem 3.1]{bre} results in this direction are given for a class of Banach algebras. In the continuation of this discussion, it is natural to compare the linear mappings that satisfy the conditions $\mathbf{(C_1)}$ or $\mathbf{(C_2)}$ with the commuting linear maps. It has been shown in \cite{gh-jing} that conditions $\mathbf{(C_1)}$ or $\mathbf{(C_2)}$ are not necessarily equivalent in general (see \cite[Example 2.2]{gh-jing}), and an example has been given (see \cite[Example 2.4]{gh-jing}) to show there exists a linear map satisfying condition $\mathbf{(C_1)}$ and $\mathbf{(C_2)}$ that is not a commuting linear map, and in \cite[Example 2.3]{gh-jing}, they have shown that the commuting linear maps do not necessarily fulfill the conditions $\mathbf{(C_1)}$ and $\mathbf{(C_2)}$. In the following in \cite{gh-jing}, these questions have been raised; what is the relationship between the commuting linear maps and the linear mappings satisfying $\mathbf{(C_1)}$ or $\mathbf{(C_2)}$? On which algebra, each linear map satisfying $\mathbf{(C_1)}$ or $\mathbf{(C_2)}$ becomes a commuting map? On which algebras are conditions $\mathbf{(C_1)}$, and $\mathbf{(C_2)}$ equivalent? Furthermore, the authors introduced a class of operator algebras on Banach spaces such that if $\U$ is in this class, then any linear mapping on $\U$ satisfying $\mathbf{(C_1)}$ or $\mathbf{(C_2)}$ is a commuting map. By this result, they have concluded that conditions $\mathbf{(C_1)}$ and $\mathbf{(C_2)}$ are equivalent for this class of operator algebras, and have characterized linear mapping satisfying $\mathbf{(C_1)}$ or $\mathbf{(C_2)}$ on nest algebras over a complex Hilbert space and on $B(\mathcal{X})$ the algebra of all bounded linear operators on a complex Banach space $\mathcal{X}$. In continuation of the mentioned study path, our intention in this article is to introduce and study the topics of article \cite{gh-jing} for Lie $n$-centralizers and $n$-commuting linear maps.
 \par 
Let $\mathcal{U}$ be an algebra. The \textit{(n - 1)-th commutator} for $n\geq 1$ on $\U$ is denoted by $p_n(a_1, a_2,\cdots , a_n ) $ and is recursively defined as follows:
\[p_1(a_1)=a_1, \quad (a_1\in \U)\]
and for all integers $n\geq 2$
\[p_n(a_1, a_2,\cdots , a_n )   =[ p_{n-1}(a_1, a_2\cdots , a_{n-1}  ), a_n ],\quad (a_1, a_2,\cdots , a_n\in \U). \]
Actually, 
\[p_n(a_1, a_2,\cdots , a_n )   =[\cdots [[a_1,a_2], a_3],\cdots,a_n].\]
It is obvious that $ p_n(a_1, a_2,\cdots , a_n )   = p_{n-1}([a_1 ,a_2],\cdots , a_n  )$ for all $a_1, a_2,\cdots , a_n\in \U$ and $n\geq 2$. The mappings related to the $(n - 1)$-th commutator on the associative algebras are of interest and are used in the study of the structure of these algebras. Among these mappings, we can mention Lie $n$-centralizers. The linear map $\phi:\U \rightarrow \U$ is said to be a \textit{Lie $n$-centralizer} for $n\geq 2$ if 
\[\phi(p_n(a_1, a_2,\cdots , a_n ))=p_n(\phi(a_1), a_2,\cdots , a_n )\] 
for all $a_1, a_2,\cdots , a_n\in \U$. It is easily checked that $\phi$ is a Lie $n$-centralizer on $\U$ if and only if $\phi(p_n(a_1, a_2,\cdots , a_n ))=p_n(a_1,\phi(a_2), \cdots , a_n )$ for all $a_1, a_2,\cdots , a_n\in \U$. In particular, a Lie $2$-centralizer is a Lie centralizer and similarly, a Lie $3$-centralizer is a Lie triple centralizer. It should be noted that a routine check shows that each Lie centralizer is Lie $n$-centralizer for each $n\geq 2$, but the converse is not necessarily true (see \cite[Example 1.2]{fad4}), so Lie $n$-centralizers are non-trivial generalizations of Lie centralizers. Lie $n$-centralizers are not only important as a generalization of Lie centralizers, but also appear in the study of the structure of other mappings such as generalized Lie n-derivations, for instance see \cite{ash}. According to the mentioned cases, recently the study of the structure of Lie $n$-centralizers for different values of n has been taken into consideration. In \cite{fad4} it has been shown that under some conditions on a unital generalized matrix algebra $\mathcal{G}$, if $\phi:\mathcal{G}\rightarrow \mathcal{G}$ is a linear Lie triple centralizer, then $\phi(a)=\lambda a+\mu (a) $ in which $\lambda\in \mathrm{Z}(\mathcal{G})$ and $\mu$ is a linear mapping from $\mathcal{G}$ into $\mathrm{Z}(\mathcal{G})$ vanishing at every second commutator $[[a,b],c]$ for all$a,b,c\in \mathcal{G}$. In \cite{yu}, Lie $n$-centralizers of generalized matrix algebras have been examined. The Lie $n$-centralizers are related to the $n$-commuting maps as well, which we express. A mapping  $\varphi : \U\rightarrow \U$ is called \textit{$n$-commuting map} for $n\geq 2$ if $p_n(\varphi(a), a,\cdots , a)=0$ for all $a\in \U$. It is clear that every commuting map is an $n$-commuting map for each $n\geq 2$ and $n$-commuting maps have been studied as a generalization of commuting maps. Bre\v{s}ar has characterized $3$-commuting additive maps on prime rings of characteristic not 2 in \cite{bre 1992}. Then, in \cite{bei}, the $n$-commuting additive maps for $n\geq2$ are described under certain conditions on the prime rings, and in \cite{liu 2020}, this result is extended to the semiprime rings. We refer to \cite{liu 2020, liu chen} to see more details in this regard. Every Lie $n$-centralizer is a $n$-commuting linear maps, but the converse is not necessarily true (see Example \ref{n-commuting not other}), so the characterization of $n$-commuting linear maps in terms of Lie $n$-centralizers on certain algebras is of interest. In continuation of these studies and with the idea of \cite{gh-jing}, in this article we consider linear mappings that are like Lie $n$-centralizers under some local conditions and compare these mappings with each other as well as with Lie $n$-centralizers and $n$-commuting linear maps. 
\par 
Let $\mathcal{U}$ be an algebra and $\phi:\U \rightarrow \U$ be a linear mapping. The local conditions for $\phi$ that we consider are as follows, where $n\geq 2$ is a fixed positive integer: 
 \begin{equation*}
 \begin{split}
 a_1a_2=0 \Rightarrow\phi(p_n(a_1, a_2,\cdots , a_n ))&=p_n(\phi(a_1), a_2,\cdots , a_n )\\&
 = p_n(a_1,\phi(a_2), \cdots , a_n ) \quad \mathbf{(T)};
 \end{split}
 \end{equation*}
 \[ a_1a_2=0 \Rightarrow \phi(p_n(a_1, a_2,\cdots , a_n ))=p_n(\phi(a_1), a_2,\cdots , a_n )\quad \mathbf{(T_l)};\]
 \[ a_1a_2=0 \Rightarrow \phi(p_n(a_1, a_2,\cdots , a_n ))=p_n(a_1,\phi(a_2), \cdots , a_n )\quad \mathbf{(T_r)}.\]
 for $a_1, a_2,\cdots ,\, a_n\in \U$. Also, we consider the following local conditions on the zero n-th products: 
 \begin{equation*}
 \begin{split}
a_1a_2\cdots a_n=0 \Rightarrow\phi(p_n(a_1, a_2,\cdots , a_n ))&=p_n(\phi(a_1), a_2,\cdots , a_n )\\&
 = p_n(a_1,\phi(a_2), \cdots , a_n ) \quad \mathbf{(N)};
 \end{split}
 \end{equation*} 
 \[a_1a_2\cdots a_n=0 \Rightarrow \phi(p_n(a_1, a_2,\cdots , a_n ))=p_n(\phi(a_1), a_2,\cdots , a_n )\quad \mathbf{(N_l)};\]
 \[a_1a_2\cdots a_n=0 \Rightarrow \phi(p_n(a_1, a_2,\cdots , a_n ))=p_n(a_1,\phi(a_2), \cdots , a_n )\quad \mathbf{(N_r)}.\]
 for $a_1, a_2,\cdots ,\, a_n\in \U$. If $n= 2$, conditions $\mathbf{(N)}$ and $\mathbf{(T)}$ become condition $\mathbf{(C)}$, conditions $\mathbf{(N_l)}$ and $\mathbf{(T_l)}$ become condition $\mathbf{(C_1)}$, and conditions $\mathbf{(N_r)}$ and $\mathbf{(T_r)}$ become condition $\mathbf{(C_2)}$. It is clear that  for $n\geq 2$ every Lie $n$-centralizer satisfies the conditions $\mathbf{(N)}$, $\mathbf{(N_l)}$, $\mathbf{(N_r)}$, $\mathbf{(T)}$, $\mathbf{(T_l)}$ and $\mathbf{(T_r)}$. These results are also obvious: $\mathbf{(N)}\Rightarrow \mathbf{(T)}\Rightarrow \mathbf{(T_l)}$, $\mathbf{(N)}\Rightarrow \mathbf{(T)}\Rightarrow \mathbf{(T_r)}$; $\mathbf{(N)}\Rightarrow \mathbf{(N_l)}\Rightarrow \mathbf{(T_l)}$ and $\mathbf{(N)}\Rightarrow \mathbf{(N_r)}\Rightarrow \mathbf{(T_r)}$. In this article, with various examples (See the Examples Section), we compare the Lie $n$-centralizers, $n$-commuting linear maps, and linear mappings valid in $\mathbf{(N)}$, $\mathbf{(N_l)}$, $\mathbf{(N_r)}$, $\mathbf{(T)}$, $\mathbf{(T_l)}$ and $\mathbf{(T_r)}$. These examples show that in general, the converse of the mentioned requirements are not necessarily true. Also, with these examples, we see that conditions $\mathbf{(N)}$, $\mathbf{(N_l)}$, $\mathbf{(N_r)}$, $\mathbf{(T)}$, $\mathbf{(T_l)}$ and $\mathbf{(T_r)}$ are not necessarily satisfied for an $n$-commuting linear map, and a linear mapping for which one of the conditions $\mathbf{(N)}$, $\mathbf{(N_l)}$, $\mathbf{(N_r)}$, $\mathbf{(T)}$, $\mathbf{(T_l)}$ or $\mathbf{(T_r)}$ is satisfied, is not necessarily an $n$-commuting map (and therefore a $n$-centralizer). Considering these examples, the following questions naturally arise: On what algebras, some conditions $\mathbf{(N)}$, $\mathbf{(N_l)}$, $\mathbf{(N_r)}$, $\mathbf{(T)}$, $\mathbf{(T_l)}$ and $\mathbf{(T_r)}$ are equivalent to each other? On what algebras do some of the conditions $\mathbf{(N)}$, $\mathbf{(N_l)}$, $\mathbf{(N_r)}$, $\mathbf{(T)}$, $\mathbf{(T_l)}$ and $\mathbf{(T_r)}$ result that a linear mapping is $n$-commuting? On what algebras can the structure of the linear mapping valid in one of the conditions $\mathbf{(N)}$, $\mathbf{(N_l)}$, $\mathbf{(N_r)}$, $\mathbf{(T)}$, $\mathbf{(T_l)}$ or $\mathbf{(T_r)}$ be determined? In the rest of the article, we will examine the answers to these questions on standard operator algebras over Banach spaces, and we get the following results. Let $\U$ be a unital standard operator algebra over a complex Banach space $\mathcal{X}$ with $dim \mathcal{X}\geq2$, and $\phi :\U \rightarrow \U$ be a linear mapping. We show that if $\phi $ satisfies one of the conditions $\mathbf{(N)}$, $\mathbf{(N_l)}$, $\mathbf{(N_r)}$, $\mathbf{(T)}$, $\mathbf{(T_l)}$ or $\mathbf{(T_r)}$ for $n\geq 2$, then $\phi$ is an $n$-commuting map (Theorem \ref{conditions n-commuting}). We prove the equivalence of conditions  $\mathbf{(T)}$, $\mathbf{(T_l)}$ and $\mathbf{(T_r)}$ together, and conditions $\mathbf{(N)}$, $\mathbf{(N_l)}$ and $\mathbf{(N_r)}$ together for $\phi$, and more than that, we determine the structure of $\phi$ under these conditions (Theorems \ref{conditions T} and \ref{conditions N}). Our results are also a generalization of the results of \cite{gh-jing}. We determine the structure of Lie $n$-centralizers on $\U$ (Proposition \ref{n-lie unital}). If $\U=B(\mathcal{H})$ (where $\mathcal{H}$ is a complex Hilbert space with $dim \mathcal{H}\geq2$) or $\phi$ is continuous under the strong operator topology, we conclude that conditions $\mathbf{(N)}$, $\mathbf{(N_l)}$, $\mathbf{(N_r)}$, $\mathbf{(T)}$, $\mathbf{(T_l)}$, $\mathbf{(T_r)}$ and that $\phi$ is Lie $n$-centralizer are equivalent (Propositions \ref{hilbert} and \ref{sot}). If $\U$ is non-unital, we characterize $n$-commuting linear maps on $\U$ and the interesting result is that in this case they are equivalent to Lie $n$-centralizers (Theorem \ref{n-lie non-unital}). As an application of the obtained results, we present some variants of Posner's second theorem on unital standard operator algebras (Theorem \ref{posner}).
 \par 
 The article is organized in 5 sections. Section 2 is devoted to the preliminaries and tools. In Section 3, examples are given. In Section 4, conditions $\mathbf{(N)}$, $\mathbf{(N_l)}$, $\mathbf{(N_r)}$, $\mathbf{(T)}$, $\mathbf{(T_l)}$ and $\mathbf{(T_r)}$ for unital standard operator algebras are studied. Lie $n$-centralizers on standard operator algebras are characterized in Section 5.
  % ------------------------------------------------------------------------
%%______________________________________________________________________________
%%******************************************************************************************
%%&&&&&&&&&&&&&&*****************************************************************************&&&&&&&&&&&
\section{\bf Preliminaries and tools}
 In this section, we provide some preliminaries and results which are used in other sections.
\par 
Let $\mathcal{R}$ be a ring. By $\mathrm{Z}(\mathcal{R})$ we denote the centre of $\mathcal{R}$. For a semiprime ring $\mathcal{R}$ (i.e., $a\mathcal{R} a = 0$ implies $a = 0$), we denote the maximal right ring of quotients of $\mathcal{R}$ by $\mathcal{Q}_{mr}(\mathcal{R})$. It is known that $\mathcal{Q}_{mr}(\mathcal{R})$ is also a semiprime ring and the centre $\mathcal{C}$ of $\mathcal{Q}_{mr}(\mathcal{R})$, which is called the \textit{extended centroid} of $\mathcal{R}$, is a regular self-injective ring. Moreover, $\mathrm{Z}(\mathcal{R})\subseteq \mathcal{C}$ and $\mathcal{C}$ is a field if $\mathcal{R}$ is a prime ring (i.e., $a\mathcal{R} b = 0$ implies $a = 0$ or $b = 0$). We refer the reader to the book \cite{bei book} for basic terminology and notation. An additive mapping $\delta:\mathcal{R}\rightarrow \mathcal{R}$ is called a \textit{derivation} when $\delta(ab)=a\delta(b)+\delta(a)b$ for all $a,b\in \mathcal{R}$. A mapping $\varphi : \mathcal{R}\rightarrow \mathcal{R}$ is called \textit{centralizing} on a subset $S$ of $\mathcal{R}$ if $[\delta(a),a]\in \mathrm{Z}(\mathcal{R})$ for all $a\in S$. In \cite{posner}, Posner has proved the following theorem, which is known as Posner's second theorem. 
 \begin{thm}$($ \cite[Theorem 2]{posner}$).$\label{pos2}
 Let $\mathcal{R}$ be a prime ring and $\delta:\mathcal{R}\rightarrow \mathcal{R}$ be a centralizing derivation. If $\mathcal{R}$ is non-commutative, then $\delta=0$.
 \end{thm}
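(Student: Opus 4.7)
The plan is to reduce the centralizing hypothesis, via the Leibniz rule and primeness, to an annihilator identity that forces either $\delta=0$ or $\mathcal{R}$ to be commutative. Write $f(x):=[\delta(x),x]$; the hypothesis reads $f(x)\in \mathrm{Z}(\mathcal{R})$, so $[f(x),r]=0$ for every $r\in\mathcal{R}$.

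First, I would linearize the relation by substituting $x+y$ for $x$, producing the biadditive central-valued form
\[B(x,y):=[\delta(x),y]+[\delta(y),x]\in \mathrm{Z}(\mathcal{R}),\qquad x,y\in\mathcal{R}.\]
Next, I would specialize $y=x^2$. Expanding $\delta(x^2)=x\delta(x)+\delta(x)x$ via Leibniz and writing out both commutators in $B(x,x^2)$, while exploiting that $f(x)$ commutes with $x$, the identity collapses to $2x\,f(x)\in \mathrm{Z}(\mathcal{R})$; commuting this with an arbitrary $r\in\mathcal{R}$ and using once more the centrality of $f(x)$ then gives
\[2\,f(x)\,[x,r]=0\qquad (x,r\in\mathcal{R}).\]

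Then I would apply primeness. Since $f(x)$ is central, inserting an arbitrary ring element between $f(x)$ and $[x,r]$ and using $[f(x),\mathcal{R}]=0$ converts the displayed relation into $f(x)\,\mathcal{R}\,[x,r]=0$, so primeness yields, for each fixed $x$, either $f(x)=0$ or $x\in \mathrm{Z}(\mathcal{R})$; in both cases $f(x)=0$. Hence $\delta$ is \emph{commuting}. Running the same linearization-and-specialization programme starting from $[\delta(x),x]=0$ produces the stronger identity $\delta(x)[x,y]=0$ for all $x,y\in\mathcal{R}$. Primeness then gives, for each $x$, either $\delta(x)=0$ or $x\in \mathrm{Z}(\mathcal{R})$; a standard additive-subgroup covering argument (a group is not the union of two proper subgroups), together with the non-commutativity of $\mathcal{R}$, forces $\delta=0$.

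The step I expect to be the main obstacle is the characteristic $2$ case: the factor $2$ arising from the $y=x^2$ substitution vanishes and the direct route collapses. To get around this I would use a longer monomial substitution such as $y=xrx$ to produce a cubic identity free of the troublesome $2$, and then invoke the same primeness argument; the remaining work is careful algebraic bookkeeping in the iterated commutators $[\delta(x),x^2]$ and $[\delta(x^2),x]$.
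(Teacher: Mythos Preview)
The paper does not supply its own proof of this statement: Theorem~\ref{pos2} is quoted verbatim from Posner's original article \cite{posner} and is used purely as a black box in the proof of Theorem~\ref{posner}. So there is nothing in the paper to compare your argument against.

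That said, your outline is essentially the classical proof. The linearization $B(x,y)=[\delta(x),y]+[\delta(y),x]\in\mathrm{Z}(\mathcal{R})$, the specialization $y=x^2$ giving a central multiple of $xf(x)$, the passage from $f(x)[x,r]=0$ to $f(x)\mathcal{R}[x,r]=0$ via centrality, and the final ``union of two subgroups'' step are all standard and correct. One small slip: your computation of $B(x,x^2)$ actually yields $4xf(x)$ rather than $2xf(x)$, since both $[\delta(x),x^2]$ and $[\delta(x^2),x]$ equal $2xf(x)$ once $f(x)$ is central; this does not affect the argument. Your own flagged obstacle, characteristic~$2$, is genuine for the theorem as stated (Posner's result holds in all characteristics), and the suggested cure of substituting a longer monomial such as $y=xrx$ is along the right lines, though the bookkeeping is nontrivial; for the application in this paper the base field is $\mathbb{C}$, so the issue never arises.
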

 It is clear that every commuting additive map is a centralizing additive map, the converse of this was proved by Bre\v{s}ar on semiprime rings. In fact, we have the following result.
 \begin{lem}$($ \cite[Proposition 3.1]{bre 1993}$).$\label{centralizing}
 Let $\mathcal{R}$ be a 2-torsion free semiprime ring and $\mathcal{J}$ be a Jordan subring of $\mathcal{R}$. If an additive mapping $\varphi :\mathcal{R}\rightarrow \mathcal{R}$ is centralizing on $\mathcal{J}$, then $\varphi$ is commuting on $\mathcal{J}$.
 \end{lem}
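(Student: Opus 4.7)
The plan is to show that for every $a \in \mathcal{J}$, the element $c_a := [\varphi(a),a]$, which by hypothesis lies in $\mathrm{Z}(\mathcal{R})$, is in fact zero. I would proceed in four main steps.

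First, I would linearize the centralizing condition. Replacing $a$ by $a+b$ with $a,b \in \mathcal{J}$ in $[\varphi(a),a] \in \mathrm{Z}(\mathcal{R})$ and using additivity of $\varphi$ yields
\[
[\varphi(a),b] + [\varphi(b),a] \in \mathrm{Z}(\mathcal{R}), \qquad a,b \in \mathcal{J}.
\]
Next, since $\mathcal{J}$ is a Jordan subring, $a \circ a = 2a^2 \in \mathcal{J}$ whenever $a \in \mathcal{J}$, so I may substitute $b = 2a^2$ into the linearized identity. Because $\mathcal{R}$ is $2$-torsion free, the condition $2z \in \mathrm{Z}(\mathcal{R})$ forces $z \in \mathrm{Z}(\mathcal{R})$, and I obtain
\[
[\varphi(a),a^2] + [\varphi(a^2),a] \in \mathrm{Z}(\mathcal{R}).
\]
Using centrality of $c_a$, the first summand simplifies as
\[
[\varphi(a),a^2] = a[\varphi(a),a] + [\varphi(a),a]a = 2a\,c_a,
\]
so $2a\,c_a + [\varphi(a^2),a] \in \mathrm{Z}(\mathcal{R})$.

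The third step is to extract a usable multilinear identity. Commuting the last relation with $a$ kills the second summand (since $[[\varphi(a^2),a],a]$ need not vanish but commutes well with central elements), and commuting with $\varphi(a)$, together with systematic use of the Jacobi identity and the fact that $c_a$ is central, I expect to arrive at a relation of the form $c_a \,r\, c_a = 0$ for all $r$ in a sufficiently large subset of $\mathcal{R}$ (or, equivalently, at $c_a^{k} = 0$ for some small $k$ after multiplying through by suitable elements). The plan is to exploit the identity $[xy,z] = x[y,z] + [x,z]y$ repeatedly to commute $c_a$ past the factors introduced by $a^2$, always using that $c_a$ sits in the center.

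Finally, the punch line is a standard consequence of semiprimeness: an element $z \in \mathrm{Z}(\mathcal{R})$ satisfying $z\mathcal{R} z = 0$ must be zero, and more generally $\mathrm{Z}(\mathcal{R})$ contains no nonzero nilpotents in a semiprime ring (since $z^{2}=0$ with $z$ central implies $z\mathcal{R} z = \mathcal{R} z^{2} = 0$). Either form of the reduction obtained in step three will yield $c_a = 0$, which is precisely the commuting property on $\mathcal{J}$. The main obstacle is the third step: carefully bookkeeping the commutator manipulations, since one must combine the additive hypothesis on $\varphi$, the Jordan-subring closure of $\mathcal{J}$ only up to the factor $2$, and the $2$-torsion freeness of $\mathcal{R}$ at each stage without introducing terms outside $\mathcal{J}$ on which the hypothesis does not apply.
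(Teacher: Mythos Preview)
The paper does not prove this lemma at all; it is quoted verbatim from \cite[Proposition~3.1]{bre 1993} and used as a black box. So there is no in-paper proof to compare your attempt against.

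That said, your outline follows the standard Bre\v{s}ar argument and is on the right track through the first two steps and the final step. The genuine gap is step~3, which you yourself flag as the main obstacle but do not resolve. Commuting $2ac_a+[\varphi(a^2),a]\in\mathrm{Z}(\mathcal{R})$ with $a$ yields nothing (both summands already commute with $a$), and commuting with $\varphi(a)$ only gives $[[\varphi(a^2),a],\varphi(a)]=2c_a^2$, which by itself does not force $c_a$ to be nilpotent: you have no control over $\varphi(a^2)$ to simplify the left side further. The missing idea is to apply the centralizing hypothesis a \emph{second} time, to $2a^2\in\mathcal{J}$ directly, giving $[\varphi(a^2),a^2]\in\mathrm{Z}(\mathcal{R})$. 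Writing $z:=2ac_a+[\varphi(a^2),a]\in\mathrm{Z}(\mathcal{R})$, one computes
\[
[\varphi(a^2),a^2]=a[\varphi(a^2),a]+[\varphi(a^2),a]a=2az-4a^2c_a\in\mathrm{Z}(\mathcal{R}),
\]
so $az-2a^2c_a\in\mathrm{Z}(\mathcal{R})$. Commuting this with $\varphi(a)$ yields $zc_a=4ac_a^2$, hence $ac_a^2\in\mathrm{Z}(\mathcal{R})$; commuting once more with $\varphi(a)$ gives $c_a^3=0$. Now your step~4 applies. Without this second use of the hypothesis your step~3 cannot be completed, so as written the proposal is a correct plan with one concrete missing ingredient rather than a complete proof sketch.
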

In \cite[Theorem 3.2]{bre 1993}, Bre\v{s}ar has characterized the commuting additive maps on prime rings in such a way that if $\mathcal{R}$ is a prime ring and $\varphi:\mathcal{R}\rightarrow \mathcal{R}$ is a commuting additive map, then there exit $\lambda \in \mathcal{C}$ (the extended centroid of $\mathcal{R}$) and an additive mapping $\mu:\mathcal{R}\rightarrow \mathcal{C}$, such that $\varphi(a)=\lambda a +\mu(a)$ for all $a\in \mathcal{R}$. Then, this result found various generalizations, one of the most general of which is the generalization to $n$-commuting additive maps on semiprime rings \cite[Theorem 1.1]{liu 2020} as follows. Let $\mathcal{R}$ be a semiprime ring and $\varphi:\mathcal{R}\rightarrow \mathcal{Q}_{mr}(\mathcal{R})$ be an $n$-commuting additive map, where $n\geq 2$ is a fixed positive integer. Then there exit $\lambda \in \mathcal{C}$ and an additive mapping $\mu:\mathcal{R}\rightarrow \mathcal{C}$, such that $\varphi(a)=\lambda a +\mu(a)$ for all $a\in \mathcal{R}$.
\par 
 An important class of operator algebras which are prime, are standard operator algebras. Throughout this paper all algebras and vector spaces will be over the complex field $\mathbb{C}$. Let $\mathcal{X}$ be a Banach space. We denote by $B(\mathcal{X})$ the algebra of all bounded linear operators on $\mathcal{X}$, and $F(\mathcal{X})$ denotes the ideal of all bounded finite rank operators in $B(\mathcal{X})$. Recall that a \textit{standard operator algebra} is any subalgebra of $B(\mathcal{X})$ which contains $F(\mathcal{X})$. It is clear $B(\mathcal{X})$ is a standard operator algebra. Let $\U$ be a standard operator algebra over the Banach space $\mathcal{X}$. If $dim \mathcal{X}\geq 2$, then $\U$ is non-commutative. An element $E\in \U$ is said to be \textit{idempotent} if $E^2 =E$. A non-trivial idempotent $E$ is an idempotent such that $E\neq 0, I$, where $I$ is the identity operator of $B(\mathcal{X})$. Every standard operator algebra is a prime algebra and contains non-trivial idempotents. Also, we remark that if the standard operator algebra $\U$ contains the identity operator $I$, we have $\mathrm{Z}(\mathcal{U})=\mathbb{C}I$, and if $\U$ is a standard operator algebra over a infinite dimensional complex Banach space $\mathcal{X}$ without the identity operator $I$, then $\mathrm{Z}(\mathcal{U})=\lbrace 0 \rbrace$.
 \par 
 If $\U$ is a unital standard operator algebra over the Banach space $\mathcal{X}$ with $dim \mathcal{X}\geq 2$, then the extended centroid of $\U$ is equal to $\mathbb{C}$. According to this point, \cite[Theorem 1.1]{liu 2020} about standard algebras is as follows, which we have stated in the form of a lemma.
 \begin{lem}\label{n-comuting standard}
 Let $\U$ be a standard operator algebra over a complex Banach space $\mathcal{X}$ with $dim \mathcal{X}\geq2$, and $I\in \U$. Suppose that  $\varphi:\mathcal{U}\rightarrow \mathcal{U}$ is an $n$-commuting linear map, where $n\geq 2$ is a fixed positive integer. Then there exit $\lambda \in \mathbb{C}$ and a linear mapping $\mu:\mathcal{U}\rightarrow \mathbb{C}$, such that $\varphi(A)=\lambda A +\mu(A)$ for all $A\in \mathcal{U}$.
 \end{lem}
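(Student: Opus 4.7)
The plan is to reduce this directly to the general semiprime-ring result \cite[Theorem 1.1]{liu 2020} stated just before the lemma. The essential content is not a new argument but three verifications: that $\U$ meets the hypotheses of that theorem, that the extended centroid collapses to $\mathbb{C}$ in our setting, and that the additive $\mu$ produced by Liu's theorem is in fact $\mathbb{C}$-linear.

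First I would recall from the preceding paragraph of the preliminaries that every standard operator algebra over a Banach space $\mathcal{X}$ with $\dim \mathcal{X}\geq 2$ is a prime algebra, hence in particular semiprime. So the hypothesis of \cite[Theorem 1.1]{liu 2020} is met by the $n$-commuting linear (and therefore additive) map $\varphi:\U\to\U\subseteq \mathcal{Q}_{mr}(\U)$. Applying that theorem yields $\lambda\in\mathcal{C}$ and an additive $\mu:\U\to\mathcal{C}$ with
\[
\varphi(A)=\lambda A+\mu(A)\qquad (A\in\U),
\]
where $\mathcal{C}$ denotes the extended centroid of $\U$.

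Second, the paper has already recorded that for a unital standard operator algebra $\U$ over a complex Banach space $\mathcal{X}$ with $\dim\mathcal{X}\geq 2$ the extended centroid satisfies $\mathcal{C}=\mathbb{C}$. Thus $\lambda\in\mathbb{C}$ and $\mu$ takes values in $\mathbb{C}$, and identifying the scalar $\mu(A)$ with $\mu(A)I\in\U$ (allowed because $I\in\U$) places the decomposition inside $\U$ as the statement requires.

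Third, to promote $\mu$ from additive to $\mathbb{C}$-linear, I would observe that $\varphi$ is linear by hypothesis and that $A\mapsto \lambda A$ is manifestly linear, so the difference $A\mapsto \mu(A)I=\varphi(A)-\lambda A$ is $\mathbb{C}$-linear; since $I\neq 0$, this forces $\mu$ itself to be $\mathbb{C}$-linear. There is no serious obstacle here: the lemma is essentially a translation of Liu's theorem to the standard-operator-algebra setting, and the only work is verifying the three items above, of which the identification $\mathcal{C}=\mathbb{C}$ is the main conceptual point.
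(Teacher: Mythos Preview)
Your proposal is correct and matches the paper's approach exactly: the paper does not give a formal proof but simply records the lemma as the specialization of \cite[Theorem 1.1]{liu 2020} to unital standard operator algebras, using the stated fact that the extended centroid equals $\mathbb{C}$. Your additional remark that $\mu$ inherits $\mathbb{C}$-linearity from $\varphi$ is a small extra detail the paper leaves implicit.
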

 If $\U$ is a non-unital standard operator algebra over an infinite dimensional Banach space $\mathcal{X}$, then the unitization of $\U$ is the algebra $\U^{\sharp}:=\lbrace A+ \lambda I \, : \, A\in \U, \lambda\in \mathbb{C} \rbrace$ which is obtained by adjoining identity to $\U$. The algebra $\U^{\sharp}$ is a unital standard operator algebra. See \cite{chuang, dal} for more on standard operator algebras.
 % ------------------------------------------------------------------------
%%______________________________________________________________________________
%%******************************************************************************************
%%&&&&&&&&&&&&&&*****************************************************************************&&&&&&&&&&&
\section{\bf Examples}
 %\subsection*{Acknowledgment}
In this section, with various examples, we compare the Lie $n$-centralizers, $n$-commuting linear maps, and linear mappings valid in $\mathbf{(N)}$, $\mathbf{(N_l)}$, $\mathbf{(N_r)}$, $\mathbf{(T)}$, $\mathbf{(T_l)}$ and $\mathbf{(T_r)}$. First, in the example below, we can see that the linear mapping that is $n$-commuting for $n\geq 2$ does not necessarily satisfy conditions $\mathbf{(T_l)}$ and $\mathbf{(T_r)}$, and therefore conditions $\mathbf{(N)}$, $\mathbf{(N_l)}$, $\mathbf{(N_r)}$ and $\mathbf{(T)}$ are not necessarily satisfied for it. Also, this mapping is not Lie $n$-centralizer.
\begin{exm}\label{n-commuting not other}
Assume that $\U$ is a unital algebra with unity $1$ and a non-trival idempotent $e$, and there is an element $ a\in \U $ with $ea(1-e)\neq 0$. Consider a linear mapping $\mu: \U\rightarrow \mathbb{C}$ such that $\mu(ea(1-e))\neq 0$. Let $n\geq 2$. Define the linear mapping $\phi:\U \rightarrow \U$ by $\phi(a)=\mu(a)1$. Because $\phi(\U)\subseteq \mathrm{Z}(\U)$, it follows that $\phi$ is an $n$-commuting mapping. We have
\[ p_n(a(1-e), e, 1-e,\cdots , 1-e )=-ea(1-e).\]
Consequently,
\begin{equation*}
\begin{split}
\phi (p_n(a(1-e), e, 1-e,\cdots , 1-e )) & =-\phi ( ea(1-e))\\&
\neq 0 \\&
=p_n(\phi(a(1-e)), e, 1-e,\cdots , 1-e )\\&
=p_n(a(1-e), \phi(e), 1-e,\cdots , 1-e ).
\end{split}
\end{equation*}
According to the above relationship and that $(a(1-e))e=0$, it follows that $\mathbf{(T_l)}$ and $\mathbf{(T_r)}$are not valid for $\phi$.
\end{exm}
%%====================================================================================
%%=======================================================================================
We introduce certain types of algebras that we will use in the construction of the following examples. Let $\mathcal{V}$ be an algebra, $\mathcal{M} $ be a left $\mathcal{V}$-module and $\mathcal{N}$ be a right $\mathcal{V}$-module. We consider the following algebras with the usual matrix operations:
\[ \mathfrak{T}_l(\mathcal{V}, \mathcal{M}):=\left\lbrace \begin{bmatrix}
a & x \\
0 & 0
\end{bmatrix} \, : \, a\in \mathcal{V}, x\in \mathcal{M}\right\rbrace\]
and
\[ \mathfrak{T}_r(\mathcal{V}, \mathcal{N}):=\left\lbrace \begin{bmatrix}
0 & y \\
0 & a
\end{bmatrix} \, : \, a\in \mathcal{V}, y\in \mathcal{N}\right\rbrace .\]
For $n\geq 2$ and $1\leq i \leq n$, suppose 
\[C_{i}=\begin{bmatrix}
a_i & x_i \\
0 & 0
\end{bmatrix}\in  \mathfrak{T}_l(\mathcal{V}, \mathcal{M}) \quad  \text{and}  \quad D_{i}=\begin{bmatrix}
0 & y_i \\
0 & a_i
\end{bmatrix}\in  \mathfrak{T}_r(\mathcal{V}, \mathcal{N}).\]
With a routine verification by induction, it can be seen that the following relations hold
\begin{equation}\label{formule left}
\begin{split}
& C_1 C_2\cdots C_n =\begin{bmatrix}
a_1a_2\cdots a_n & a_1a_2\cdots a_{n-1}x_n \\
0 & 0
\end{bmatrix} ;\\&
p_n(C_1, C_2,\cdots ,C_n)=\begin{bmatrix}
p_n(a_1,a_2,\cdots, a_n) & \ast \\
0 & 0
\end{bmatrix};
\end{split}
\end{equation}
and
\begin{equation}\label{formule right}
\begin{split}
& D_1 D_2\cdots D_n =\begin{bmatrix}
0 & y_1a_2\cdots a_{n} \\
0 & a_1a_2\cdots a_n 
\end{bmatrix} ;\\&
p_n(D_1, D_2,\cdots ,D_n)=\begin{bmatrix}
0 & \ast \\
0 & p_n(a_1,a_2,\cdots, a_n)
\end{bmatrix}.
\end{split}
\end{equation}
%%========================================================================================
In the next example, we present a linear mapping that satisfies condition $\mathbf{(N)}$ but is not an $n$-commuting map. Therefore, we see that the validating mapping in conditions $\mathbf{(N)}$, $\mathbf{(N_l)}$, $\mathbf{(N_r)}$, $\mathbf{(T)}$, $\mathbf{(T_l)}$ and $\mathbf{(T_r)}$ is not necessarily an $n$-commuting map.
\begin{exm}\label{other not n-commuting}
We consider the algebra $\U:=\mathfrak{T}_l(\mathbb{C}, \mathbb{C})$. For $n\geq 2$ and $1\leq i \leq n$, let $A_{i}=\begin{bmatrix}
\lambda_i & \gamma_i \\
0 & 0 
\end{bmatrix}\in  \U $. Using \eqref{formule left} and a routine calculation based on induction, it can be seen that
\begin{equation}\label{formule left complex}
\begin{split}
& A_1 A_2\cdots A_n =\begin{bmatrix}
\lambda_1\lambda_2\cdots \lambda_n & \lambda_1\lambda_2\cdots \lambda_{n-1}\gamma_n \\
0 & 0
\end{bmatrix} ;\\&
\text{and} \\&
p_n(A_1, A_2,\cdots ,A_n)=\begin{bmatrix}
0 & (-1)^{n} \lambda_n \lambda_{n-1}\cdots\lambda_3(\lambda_{1}\gamma_2-\lambda_2\gamma_1)\\
0 & 0
\end{bmatrix}.
\end{split}
\end{equation}
Define the linear mapping $\phi : \U\rightarrow \U$ by $\phi(\begin{bmatrix}
\lambda & \gamma \\
0 & 0 
\end{bmatrix}):=\begin{bmatrix}
0 & \lambda \\
0 & 0 
\end{bmatrix}$. Suppose that $A_1 A_2\cdots A_n =0$, where $n\geq 2$ and for $1\leq i \leq n$, $A_{i}=\begin{bmatrix}
\lambda_i & \gamma_i \\
0 & 0 
\end{bmatrix}\in  \U $. It follows from \eqref{formule left complex} that 
\[\phi(p_n(A_1, A_2,\cdots ,A_n))=0 .\]
On the other hand, according to \eqref{formule left complex}, it follows that 
\[p_n(\phi(A_1), A_2,\cdots ,A_n)=\begin{bmatrix}
0 & (-1)^{n+1} \lambda_n \lambda_{n-1}\cdots\lambda_2\lambda_1 \\
0 & 0 
\end{bmatrix}=0\]
and
\[p_n(A_1, \phi(A_2),\cdots ,A_n)=\begin{bmatrix}
0 & (-1)^{n} \lambda_n \lambda_{n-1}\cdots\lambda_1\lambda_2 \\
0 & 0 
\end{bmatrix}=0.\]
So $\phi$ satisfies condition $\mathbf{(N)}$ for each $n\geq 2$. Consider $A=\begin{bmatrix}
1 & 0 \\
0 & 0 
\end{bmatrix}\in  \U $. From the definition of $\phi$ and \eqref{formule left complex}, we get that
\[p_n(\phi(A), A,\cdots ,A)=\begin{bmatrix}
0 & (-1)^{n+1}  \\
0 & 0 
\end{bmatrix}\neq 0.\]
Therefore, for each $n\geq 2$, $\phi$ is not an $n$-commuting map.
\end{exm}
%%================================================================================
The next example shows that the condition $\mathbf{(T_l)}$ does not necessarily result in the rest of the conditions $\mathbf{(N)}$, $\mathbf{(N_l)}$, $\mathbf{(N_r)}$, $\mathbf{(T)}$ and $\mathbf{(T_r)}$.
\begin{exm}\label{Tl other not }
Let $\U:=\mathfrak{T}_l(\mathbb{C}, \mathbb{C})$. Define the linear mapping $\phi : \U\rightarrow \U$ by $\phi(\begin{bmatrix}
\lambda & \gamma \\
0 & 0 
\end{bmatrix}):=\begin{bmatrix}
\lambda & \lambda \\
0 & 0 
\end{bmatrix}$. For $n\geq 2$ and $1\leq i \leq n$, let $A_{i}=\begin{bmatrix}
\lambda_i & \gamma_i \\
0 & 0 
\end{bmatrix}\in  \U $ with $A_1 A_2=0$. So $\lambda_1 \lambda_2 =0$ and $\lambda_1 \gamma_2=0$. By \eqref{formule left complex} we obtain
\[\phi(p_n(A_1, A_2,\cdots ,A_n))=0 .\]
On the other hand, according to \eqref{formule left complex}, we have
\[p_n(\phi(A_1), A_2,\cdots ,A_n)=\begin{bmatrix}
0 & (-1)^{n+1} \lambda_n \lambda_{n-1}\cdots\lambda_3(\lambda_1 \gamma_2 -\lambda_2 \lambda_1)\\
0 & 0 
\end{bmatrix}=0.\]
Hence $\phi$ satisfies $\mathbf{(T_l)}$. For $n\geq 3$ and $1\leq i \leq n$ when $i\neq 2$, consider $A_{i}:=\begin{bmatrix}
1& 0 \\
0 & 0 
\end{bmatrix}\in  \U $, and $A_2 := \begin{bmatrix}
0 & 1\\
0 & 0 
\end{bmatrix}$. In this case $A_1 A_2 \cdots A_n =0$ and 
\[p_n(\phi(A_1), A_2,\cdots ,A_n)=\begin{bmatrix}
0 & (-1)^{n}  \\
0 & 0 
\end{bmatrix}\neq 0= \phi(p_n(A_1, A_2,\cdots ,A_n)).\]
So $\phi$ does not satisfy $\mathbf{(N_l)}$ for $n\geq 3$. Now, for $n\geq 2$ and $2\leq i \leq n$ we put $A_{i}:=\begin{bmatrix}
1& 0 \\
0 & 0 
\end{bmatrix}\in  \U $, and $A_1 := \begin{bmatrix}
0 & 1\\
0 & 0 
\end{bmatrix}$. Thus $A_1A_2 =0$ and 
\[p_n(A_1,\phi(A_2), A_3,\cdots ,A_n)=\begin{bmatrix}
0 & (-1)^{n+1}  \\
0 & 0 
\end{bmatrix}\neq 0= \phi(p_n(A_1, A_2,\cdots ,A_n)).\]
In this case, condition $\mathbf{(T_r)}$ for $\phi$ is not fulfilled
\end{exm}
%%====================================================================================
In the next example, we present a linear mapping for which $\mathbf{(T_r)}$ is true, but the rest of the conditions $\mathbf{(N)}$, $\mathbf{(N_l)}$, $\mathbf{(N_r)}$, $\mathbf{(T)}$ and $\mathbf{(T_l)}$ are not satisfied.
\begin{exm}\label{Tr other not}
Let $\mathcal{V}$ be the subalgebra of $M_3(\mathbb{C})$ ($3 \times 3$ complex matrices) of the
form 
\[ \mathcal{V}=\left\lbrace \begin{bmatrix}
\lambda & \gamma & \zeta \\
0 & \lambda & \xi \\
0 & 0 & \lambda
\end{bmatrix} \, : \, \lambda, \gamma, \zeta , \xi\in \mathbb{C}\right\rbrace .\]
Consider the algebra $\U:=\mathfrak{T}_l(\mathcal{V}, \mathcal{V})$. Define the linear mapping $\phi : \U\rightarrow \U$ by $\phi(\begin{bmatrix}
R & S \\
0 & 0 
\end{bmatrix}):=\begin{bmatrix}
0 & R \\
0 & 0 
\end{bmatrix}$, ($R,S\in \mathcal{V}$). For $n\geq 3$ and $1\leq i \leq n$, let $A_{i}=\begin{bmatrix}
R_i & S_i \\
0 & 0 
\end{bmatrix}\in  \U $. We have $p_n(R_1, R_2,\cdots ,R_n)=0$, and by \eqref{formule left} 
\begin{equation}\label{formule left 3*3}
\begin{split}
& A_1 A_2\cdots A_n =\begin{bmatrix}
R_1R_2\cdots R_n & R_1 R_2\cdots R_{n-1}S_n \\
0 & 0
\end{bmatrix} ;\\&
\text{and} \\&
p_n(A_1, A_2,\cdots ,A_n)=\begin{bmatrix}
0 & \ast\\
0 & 0
\end{bmatrix}.
\end{split}
\end{equation}
Consequently, $\phi(p_n(A_1, A_2,\cdots ,A_n))=0$. If $A_1 A_2 =0$, then $R_1R_2=0$, $R_1 S_2=0$. In this case, by using \eqref{formule left 3*3} and a routine calculation based on induction, we get 
\[p_n(A_1,\phi(A_2), A_3,\cdots ,A_n)=\begin{bmatrix}
0 & (-1)^{n}R_n R_{n-1}\cdots R_3 R_1 R_2  \\
0 & 0 
\end{bmatrix}=0.\]
Thus $\phi$ satisfies $\mathbf{(T_r)}$. For $n\geq 3$ and $3\leq i \leq n$, put $A_{i}:=\begin{bmatrix}
I & 0 \\
0 & 0 
\end{bmatrix} $ where $I$ is the identity matrix of $M_3(\mathbb{C})$, $A_1 :=\begin{bmatrix}
R_1 & 0 \\
0 & 0 
\end{bmatrix}$, and $A_2 :=\begin{bmatrix}
R_2 & 0 \\
0 & 0 
\end{bmatrix}$ where $R_1:= \begin{bmatrix}
0 & 0 & 0\\
0 & 0 & 1\\
0 & 0& 0
\end{bmatrix}$, and $R_2:= \begin{bmatrix}
0 & 1 & 0\\
0 & 0 & 0\\
0 & 0& 0
\end{bmatrix}$. In this case $A_1 A_2 =0$ because $R_1R_2=0$. By \eqref{formule left 3*3} and the fact that $R_2R_1= \begin{bmatrix}
0 & 0 & 1\\
0 & 0 & 0\\
0 & 0& 0
\end{bmatrix}$, it follows that 
\[p_n(\phi(A_1), A_2,\cdots ,A_n)=\begin{bmatrix}
0 & (-1)^{n+1}R_2R_1  \\
0 & 0 
\end{bmatrix}\neq 0= \phi(p_n(A_1, A_2,\cdots ,A_n)).\]
So $\phi$ does not satisfy $\mathbf{(T_l)}$ for $n\geq 3$. Now, for $n\geq 3$ and $1\leq i \leq n$ when $i\neq 1,3$, we put $A_{i}:=\begin{bmatrix}
I & 0 \\
0 & 0 
\end{bmatrix}\in  \U $, $A_1 :=\begin{bmatrix}
R_1 & 0 \\
0 & 0 
\end{bmatrix}$, and $A_3 :=\begin{bmatrix}
R_3 & 0 \\
0 & 0 
\end{bmatrix}$ where $R_1:= \begin{bmatrix}
0 & 0 & 0\\
0 & 0 & 1\\
0 & 0& 0
\end{bmatrix}$, and $R_3:= \begin{bmatrix}
0 & 1 & 0\\
0 & 0 & 0\\
0 & 0& 0
\end{bmatrix}$. It follows from \eqref{formule left 3*3} that $A_1A_2 \cdots A_n=0$ and 
\[p_n(A_1, \phi(A_2), A_3,\cdots ,A_n)=\begin{bmatrix}
0 & (-1)^{n}R_3R_1  \\
0 & 0 
\end{bmatrix}\neq 0= \phi(p_n(A_1, A_2,\cdots ,A_n)).\]
In this case, condition $\mathbf{(N_r)}$ for $\phi$ is not fulfilled
\end{exm}
%%===============================================================================
In the next example, we provide a linear mapping that satisfies condition $\mathbf{(N_l)}$ but does not satisfy conditions $\mathbf{(N)}$, $\mathbf{(N_r)}$, $\mathbf{(T)}$ and $\mathbf{(T_r)}$.
\begin{exm}\label{Nl other not }
Suppose that $\mathcal{V}$ is the same algebra defined in Example \ref{Tr other not} and we consider algebra $\U$ in the form of $\U:=\mathfrak{T}_r(\mathcal{V}, \mathcal{V})$. For $n\geq 3$ and $1\leq i \leq n$, let $A_{i}=\begin{bmatrix}
0 & S_i \\
0 & R_i 
\end{bmatrix}\in  \U $. We have $p_n(R_1, R_2,\cdots ,R_n)=0$, and by \eqref{formule right} 
\begin{equation}\label{formule right 3*3}
\begin{split}
& A_1 A_2\cdots A_n =\begin{bmatrix}
0 & S_1 R_2\cdots R_{n}\\
0 & R_1R_2\cdots R_n 
\end{bmatrix} ;\\&
\text{and} \\&
p_n(A_1, A_2,\cdots ,A_n)=\begin{bmatrix}
0 & \ast\\
0 & 0
\end{bmatrix}.
\end{split}
\end{equation}
Define the linear mapping $\phi : \U\rightarrow \U$ by $\phi(\begin{bmatrix}
0 & S \\
0 & R 
\end{bmatrix}):=\begin{bmatrix}
0 & R \\
0 & 0 
\end{bmatrix}$, ($R,S\in \mathcal{V}$). It follows from \eqref{formule right 3*3} that $\phi(p_n(A_1, A_2,\cdots ,A_n))=0$. For $n\geq 3$ and for $1\leq i \leq n$, let's assume that $A_{i}=\begin{bmatrix}
0 & S_i \\
0 & R_i 
\end{bmatrix}\in  \U $ is such that $A_1 A_2\cdots A_n =0$. In view of \eqref{formule right 3*3}, we get $S_1 R_2\cdots R_{n}=0$ and $R_1R_2\cdots R_n =0$ and 
\begin{equation*}
\begin{split} 
\phi(p_n(A_1, A_2,\cdots ,A_n))&=0\\&
= \begin{bmatrix}
0 & R_1R_2\cdots R_n   \\
0 & 0 
\end{bmatrix}\\& 
=p_n(\phi(A_1),A_2, A_3,\cdots ,A_n).
\end{split}
\end{equation*}
Therefore $\phi$ satisfies $\mathbf{(N_l)}$. For $n\geq 3$ and $3\leq i \leq n$, put $A_{i}:=\begin{bmatrix}
0 & 0 \\
0 & I 
\end{bmatrix} $, $A_1 :=\begin{bmatrix}
0 & 0 \\
0 & R_1 
\end{bmatrix}$, and $A_2 :=\begin{bmatrix}
0 & 0 \\
0 & R_2 
\end{bmatrix}$ where $R_1:= \begin{bmatrix}
0 & 0 & 0\\
0 & 0 & 1\\
0 & 0& 0
\end{bmatrix}$, and $R_2:= \begin{bmatrix}
0 & 1 & 0\\
0 & 0 & 0\\
0 & 0& 0
\end{bmatrix}$. In this case $A_1 A_2 =0$ because $R_1R_2=0$. So  
\[p_n(A_1,\phi(A_2), A_3\cdots ,A_n)=\begin{bmatrix}
0 & -R_2R_1 \\
0 & 0 
\end{bmatrix}\neq 0= \phi(p_n(A_1, A_2,\cdots ,A_n)).\]
Consequently, $\phi$ does not satisfy $\mathbf{(T_r)}$ for $n\geq 3$.
\end{exm}
%%==================================================================================
The following example shows that condition $\mathbf{(N_r)}$ for a linear mapping does not necessarily result in conditions $\mathbf{(N)}$, $\mathbf{(N_l)}$, $\mathbf{(T)}$ and $\mathbf{(T_l)}$. 
%So, the linear mapping valid in condition $\mathbf{(T_r)}$ does not necessarily satisfy conditions $\mathbf{(N)}$, $\mathbf{(N_l)}$, $\mathbf{(T)}$ and $\mathbf{(T_l)}$.
\begin{exm}\label{Nr other not }
We consider the algebra $\U:=\mathfrak{T}_r(\mathbb{C}, \mathbb{C})$. For $n\geq 2$ and $1\leq i \leq n$, let $A_{i}=\begin{bmatrix}
0 & \gamma_i \\
0 & \lambda_i 
\end{bmatrix}\in  \U $. Using \eqref{formule right} and a routine calculation based on induction, we arrive at
\begin{equation}\label{formule right complex}
\begin{split}
& A_1 A_2\cdots A_n =\begin{bmatrix}
0 & \gamma_1\lambda_2\cdots \lambda_n \\
0 & \lambda_1\lambda_2\cdots \lambda_n
\end{bmatrix} ;\\&
\text{and} \\&
p_n(A_1, A_2,\cdots ,A_n)=\begin{bmatrix}
0 & (\gamma_{1}\lambda_2-\gamma_2\lambda_1) \lambda_3 \cdots\lambda_n\\
0 & 0
\end{bmatrix}.
\end{split}
\end{equation}
Define the linear mapping $\phi : \U\rightarrow \U$ by $\phi(\begin{bmatrix}
0 & \gamma \\
0 & \lambda 
\end{bmatrix}):=\begin{bmatrix}
0 & \lambda \\
0 & \lambda
\end{bmatrix}$. For $n\geq 2$ and for $1\leq i \leq n$, let's assume that $A_{i}=\begin{bmatrix}
0 & \gamma_i \\
0 & \lambda_i 
\end{bmatrix}\in  \U $ is such that $A_1 A_2\cdots A_n =0$. According to \eqref{formule right complex}, we have $\gamma_1 \lambda_2 \cdots \lambda_n =0$, $\lambda_1 \lambda_2 \cdots \lambda_n =0$ and 
\begin{equation*}
\begin{split} 
\phi(p_n(A_1, A_2,\cdots ,A_n))&=0\\&
= \begin{bmatrix}
0 & (\gamma_1 \lambda_2 -\lambda_2 \lambda_1)\lambda_3 \cdots \lambda_n  \\
0 & 0 
\end{bmatrix}\\& 
=p_n(A_1,\phi(A_2), A_3,\cdots ,A_n).
\end{split}
\end{equation*}
Therefore $\phi$ satisfies $\mathbf{(N_r)}$. Now, for $n\geq 2$ and $1\leq i \leq n$ with $i\neq 2$ we put $A_{i}:=\begin{bmatrix}
0& 0 \\
0 & 1 
\end{bmatrix}\in  \U $, and $A_2 := \begin{bmatrix}
0 & 1\\
0 & 0 
\end{bmatrix}$. So $A_1A_2 =0$ and 
\[p_n(\phi(A_1),A_2,\cdots ,A_n)=\begin{bmatrix}
0 & -1 \\
0 & 0 
\end{bmatrix}\neq 0= \phi(p_n(A_1, A_2,\cdots ,A_n)).\]
Consequently, $\phi$ does not satisfy $\mathbf{(T_l)}$ for $n\geq 2$.
\end{exm}
%%=================================================================================
The next example shows that for $n\geq 3$, condition $\mathbf{(T)}$ does not necessarily result in conditions $\mathbf{(N_r)}$, $\mathbf{(N_l)}$ and $\mathbf{(N)}$.
\begin{exm}\label{T other not N}
Consider the algebra $\U:=\mathfrak{T}_l(\mathfrak{T}_r(\mathbb{C},\mathbb{C}), \mathfrak{T}_r(\mathbb{C},\mathbb{C}))$. Define the linear mapping $\phi : \U\rightarrow \U$ by $\phi(\begin{bmatrix}
R & S \\
0 & 0 
\end{bmatrix}):=\begin{bmatrix}
0 & \begin{bmatrix}
0 & 0 \\
0 & \lambda 
\end{bmatrix} \\
0 & 0 
\end{bmatrix}$ where $R=\begin{bmatrix}
0 & \gamma \\
0 & \lambda
\end{bmatrix} \in  \mathfrak{T}_r(\mathbb{C},\mathbb{C})$. For $n\geq 3$ and $1\leq i \leq n$, let $A_{i}=\begin{bmatrix}
R_i & S_i \\
0 & 0 
\end{bmatrix}\in  \U $ where $R_i=\begin{bmatrix}
0 & \gamma_i \\
0 & \lambda_i 
\end{bmatrix}\in  \mathfrak{T}_r(\mathbb{C},\mathbb{C})$ and $S_i=\begin{bmatrix}
0 & \gamma^{\prime}_i \\
0 & \lambda^{\prime}_ i
\end{bmatrix}\in  \mathfrak{T}_r(\mathbb{C},\mathbb{C})$. It follows from \eqref{formule left} and \eqref{formule right complex} that 
\[p_n(A_1, A_2,\cdots ,A_n)=\begin{bmatrix}
\begin{bmatrix}
0 & (\gamma_{1}\lambda_2-\gamma_2\lambda_1) \lambda_3 \cdots\lambda_n\\
0 & 0
\end{bmatrix} & \ast \\
0 & 0
\end{bmatrix}.\]
Consequently, $\phi(p_n(A_1, A_2,\cdots ,A_n))=0$. If $A_1 A_2=0$, then $R_1R_2=0$, and hence $\lambda_1 \lambda_2=0$. So
\[ p_n(\phi(A_1), A_2,\cdots ,A_n))=\begin{bmatrix}
0 &(-1)^{n+1}\begin{bmatrix}
0 & \gamma_{n}\lambda_{n-1} \cdots\lambda_2\lambda_1\\
0 & \lambda_{n}\lambda_{n-1} \cdots\lambda_2\lambda_1
\end{bmatrix}  \\
0 & 0
\end{bmatrix}=0 \]
and 
\[ p_n(A_1,\phi(A_2),\cdots ,A_n))=\begin{bmatrix}
0 &(-1)^{n}\begin{bmatrix}
0 & \gamma_{n}\lambda_{n-1} \cdots\lambda_3\lambda_1\lambda_2\\
0 & \lambda_{n}\lambda_{n-1} \cdots\lambda_3\lambda_1\lambda_2
\end{bmatrix}  \\
0 & 0
\end{bmatrix}=0 .\]
Thus $\phi$ satisfies $\mathbf{(T)}$. For $n\geq 3$ and $1\leq i \leq n$, put $A_{i}:=\begin{bmatrix}
R_i & 0 \\
0 & 0 
\end{bmatrix} $ where $R_i:= \begin{bmatrix}
 0 & 0\\
 0 & 1
\end{bmatrix}$ for $1\leq i \leq n-1$, and $R_n:= \begin{bmatrix}
0 & 1 \\
0 & 0 
\end{bmatrix}$. In this case $A_1A_2\cdots A_n=0$,
\[p_n(\phi(A_1),A_2,\cdots ,A_n)=\begin{bmatrix}
0 & (-1)^{n+1}\begin{bmatrix}
0 & 1 \\
0 & 0 
\end{bmatrix} \\
0 & 0 
\end{bmatrix}\neq 0= \phi(p_n(A_1, A_2,\cdots ,A_n))\]
and
\[p_n(A_1,\phi(A_2),\cdots ,A_n)=\begin{bmatrix}
0 & (-1)^{n}\begin{bmatrix}
0 & 1 \\
0 & 0 
\end{bmatrix} \\
0 & 0 
\end{bmatrix}\neq 0= \phi(p_n(A_1, A_2,\cdots ,A_n)).\]
Hence $\phi$ does not satisfy $\mathbf{(N_l)}$ and $\mathbf{(N_r)}$ for $n\geq 3$.
\end{exm}
%%===================================================================================
In the last example, we show that for $n\geq 2$, a Lie $n$-centralizer is not necessarily in standard form.
\begin{exm}\label{non standard form}
Assume that $\mathcal{I}$ is the ideal of $\dfrac{\mathbb{C}[x]}{(x^{3})}$ containing all members in the form $\lambda_1 u+\lambda_2 u^2$, where $(x^{3})$ is the ideal generated by $x^3$ and $u=x+(x^{3})$. Consider the algebra 
\[ \mathcal{U}:=\left\lbrace \begin{bmatrix}
\lambda & q \\
0 & p
\end{bmatrix} \, : \, \lambda\in \mathbb{C}, p\in \mathcal{I},  q \in \dfrac{\mathbb{C}[x]}{(x^{3})}\right\rbrace\]
under the usual matrix operations. It follows from $($\cite[Lemma 1]{kr}$)$ that $\mathrm{Z}(\U)=\lambda I$ where $I=\begin{bmatrix}
1 & 0 \\
0 & 1 
\end{bmatrix}$ is the identity matrix. Define the linear mapping $\phi : \U\rightarrow \U$ by
\[ \phi( \begin{bmatrix}
\lambda & \gamma_1 u+\gamma_2 u^{2} \\
0 & \xi_0 +\xi_1 u+\xi_2 u^{2}
\end{bmatrix})=\begin{bmatrix}
0 & 0 \\
0 & \lambda u^{2}
\end{bmatrix} .\]
It is easily checked that $\phi([R,S])=0=[\phi(R),S]$ for all $R,S\in \U$. So $\phi$ is a Lie centralizer. If $\phi$ is in standard form, the there are central elements $\lambda I$ and $\zeta I$ in $\mathrm{Z}(\U)$ such that 
\[\begin{bmatrix}
0 & 0 \\
0 & u^{2}
\end{bmatrix}=\phi( \begin{bmatrix}
1 & 0 \\
0 & 0
\end{bmatrix})= \lambda\begin{bmatrix}
1 & 0 \\
0 & 0
\end{bmatrix} +\zeta I.  \]
Therefore $u^{2} =\zeta$ and this is a contradiction. Consequently, $\phi$ is not in standard form. Because every Lie centralizer is a Lie $n$-centralizer for every $n\geq 2$, therefore, $\phi$ for $n\geq 2$ is a Lie $n$-centralizer that is not in standard form.
\end{exm}
At the end of this section, we point out that the comparison of Lie $n$-centralizers, $n$-commuting linear maps, and linear mappings valid in $\mathbf{(N)}$, $\mathbf{(N_l)}$, $\mathbf{(N_r)}$, $\mathbf{(T)}$, $\mathbf{(T_l)}$ and $\mathbf{(T_r)}$ for different values of non-negative integers can be one of the issues of interest that we have not addressed here.
%%_____________________________________________________________________________________________________
%%_____________________________________________________________________________________________________
 % ------------------------------------------------------------------------
%%______________________________________________________________________________
%%******************************************************************************************
%%&&&&&&&&&&&&&&*****************************************************************************&&&&&&&&&&&
\section{\bf Lie $n$-centralizers via local actions on unital standard operator algebras.}
Throughout this section, we assume that $\U$ is a standard operator algebra over a complex Banach space $\mathcal{X}$ with $dim \mathcal{X}\geq2$. In this section, we study Lie $n$-centralizers, $n$-commuting linear maps and validating linear mappings in conditions $\mathbf{(N)}$, $\mathbf{(N_l)}$, $\mathbf{(N_r)}$, $\mathbf{(T)}$, $\mathbf{(T_l)}$ and $\mathbf{(T_r)}$ on standard operator algebras.
\begin{thm}\label{conditions n-commuting}
Let $\U$ contain the identity operator $I$. Assume that the linear mapping $\phi :\U \rightarrow \U$ satisfies one of the conditions $\mathbf{(N)}$, $\mathbf{(N_l)}$, $\mathbf{(N_r)}$, $\mathbf{(T)}$, $\mathbf{(T_l)}$ or $\mathbf{(T_r)}$ for $n\geq 2$. Then $\phi$ is an $n$-commuting map.
\end{thm}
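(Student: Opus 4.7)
The plan is to first reduce the six conditions to the two weakest ones, $\mathbf{(T_l)}$ and $\mathbf{(T_r)}$: since $\mathbf{(N)} \Rightarrow \mathbf{(T)} \Rightarrow \mathbf{(T_l)}$, $\mathbf{(N_l)} \Rightarrow \mathbf{(T_l)}$, and the symmetric implications hold on the right, it suffices to establish the conclusion under $\mathbf{(T_l)}$ or $\mathbf{(T_r)}$. A direct induction on $n$ gives the reformulation $p_n(\phi(A), A, \ldots, A) = (-1)^{n-1}\mathrm{ad}_A^{n-1}(\phi(A))$, so the target becomes $\mathrm{ad}_A^{n-1}(\phi(A)) = 0$ for every $A \in \U$.

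Assume $\mathbf{(T_l)}$. The key derived identity is obtained by specializing $a_1 = A$, $a_2 = B$ with $AB = 0$, and $a_3 = \cdots = a_n = A$. Repeated use of $[X, A] = -\mathrm{ad}_A(X)$ on both sides of $\mathbf{(T_l)}$ gives, after the $(-1)^n$ factors on the two sides cancel,
\begin{equation*}
\phi\bigl(\mathrm{ad}_A^{n-1}(B)\bigr) \;=\; \mathrm{ad}_A^{n-2}\bigl([\phi(A), B]\bigr) \qquad \text{whenever } AB = 0.
\end{equation*}
When $A^2 = 0$ one may take $B = A$: the left side collapses to $\phi(0) = 0$ since $\mathrm{ad}_A(A) = 0$, and the right side equals $-\mathrm{ad}_A^{n-1}(\phi(A))$. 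Hence $p_n(\phi(A), A, \ldots, A) = 0$ for every square-zero element $A \in \U$.

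To propagate this to arbitrary $A \in \U$, I would exploit the abundance of non-trivial idempotents in $\U$, which exist because $F(\mathcal{X}) \subseteq \U$ and $\dim \mathcal{X} \geq 2$. For a non-trivial idempotent $E$ and an arbitrary $A \in \U$, the Peirce off-diagonal parts $A_{12} := EA(I-E)$ and $A_{21} := (I-E)AE$ are square-zero, so the conclusion already applies to them. For the diagonal blocks and the mixed terms that appear when one expands $\mathrm{ad}_A^{n-1}(\phi(A))$, further specializations of $\mathbf{(T_l)}$ with zero-product pairs chosen from $\{A_{ij}, E, I-E\}$ and with $C_3, \ldots, C_n \in \{E, I-E, A\}$ yield block-wise identities constraining the Peirce components of $\phi(A)$. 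Combining these identities, one shows that each Peirce block of $\mathrm{ad}_A^{n-1}(\phi(A))$ vanishes.

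The main obstacle is the bookkeeping in this last step: because $\mathrm{ad}_A^{n-1}(\phi(A))$ intertwines all four Peirce blocks of $A$, producing enough independent linear relations to kill each block requires a careful choice of test elements and a controlled expansion of the iterated commutator. For condition $\mathbf{(T_r)}$, the analogous specialization $a_1 = B$, $a_2 = A$, $a_3 = \cdots = a_n = A$ with $BA = 0$ produces the same derived identity (with the hypothesis $BA = 0$ replacing $AB = 0$), and the rest of the argument runs identically, so a single treatment covers both cases and hence all six local conditions.
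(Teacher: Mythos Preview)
Your reduction to $\mathbf{(T_l)}$ and $\mathbf{(T_r)}$ is correct, and the derivation of $p_n(\phi(A),A,\ldots,A)=0$ for every square-zero $A$ is clean and valid. The gap is the passage from square-zero elements to arbitrary $A$. The assignment $A\mapsto p_n(\phi(A),A,\ldots,A)$ is a polynomial of total degree $n$ in $A$, so vanishing on square-zero elements (or on the off-diagonal Peirce pieces $A_{12},A_{21}$) does not propagate linearly. Your plan to ``produce enough independent linear relations'' by specializing $\mathbf{(T_l)}$ at various zero-product pairs from $\{A_{ij},E,I-E\}$ is only a hope: you have not identified which specializations are needed, nor argued why the resulting system forces every Peirce block of $\mathrm{ad}_A^{\,n-1}(\phi(A))$ to vanish. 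With a single idempotent $E$, the diagonal parts $A_{11}\in E\U E$ and $A_{22}\in (I-E)\U(I-E)$ are essentially unconstrained, and there is no evident mechanism by which the available zero products control their contribution to the iterated commutator for general $n$.

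The paper's proof avoids this obstacle by exploiting the standard-operator-algebra hypothesis more fully. Instead of fixing one idempotent, it uses every rank-one idempotent $E$ (and $F=I-E$) together with the zero products $(A_1F)E=0$, $(A_1E)F=0$ to show that
\[
\phi\bigl(p_n(A_1,T,A_3,\ldots,A_n)\bigr)=p_n\bigl(\phi(A_1),T,A_3,\ldots,A_n\bigr)
\]
for \emph{every} $T\in F(\mathcal{X})$, since finite-rank operators are spanned by rank-one idempotents. Rewriting this and iterating for $A_1A_2T$ produces an identity in which $T$ appears only as a multiplicative factor; choosing a net $T_\alpha\to I$ in the strong operator topology (using $\overline{F(\mathcal{X})}^{\,SOT}=B(\mathcal{X})$ and separate SOT-continuity of multiplication) eliminates $T$ altogether, and specializing $A_1=\cdots=A_n=A$ yields $p_n(\phi(A),A,\ldots,A)=0$ directly. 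This SOT-density argument is the missing ingredient; your Peirce-block strategy, as written, does not supply a substitute for it.
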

\begin{proof}
Let $\phi$ satisfies $\mathbf{(T_l)}$. Suppose that $E\in \U$ is a rank-one idempotent, $F:=I-E$, and $A_1, A_3,\cdots , A_n \in \U$ are arbitrary elements. Since $(A_1F)E=0$, it follows that 
\[ \phi (p_n(A_1F, E ,A_3,\cdots , A_n ) )  = p_n( \phi (A_1 F ),E,A_3,\cdots , A_n  ) .\]
Hence
\begin{equation}\label{eq1}
\begin{split}
&\phi (p_n(A_1, E ,A_3,\cdots , A_n ) )-p_n( \phi (A_1 ),E, A_3,\cdots , A_n  )\\&
\, =\phi (p_n(A_1E, E ,A_3,\cdots , A_n ) )+\phi (p_n(A_1F, E ,A_3,\cdots , A_n ) )\\&
\quad -p_n(\phi(A_1E), E ,A_3,\cdots , A_n )-p_n( \phi (A_1 F ),E ,A_3,\cdots , A_n  ) \\ &
\, =\phi (p_n(A_1E, E ,A_3,\cdots , A_n ) )-p_n(\phi(A_1E), E ,A_3,\cdots , A_n ).
\end{split}
\end{equation}
By the fact that $(A_1 E)F=0$ we get
\[ \phi (p_n(A_1E, F ,A_3,\cdots , A_n ) )  = p_n( \phi (A_1 E ),F,A_3,\cdots , A_n  ) .\]
It follows from \eqref{eq1} and above equation that 
\begin{equation*}
\begin{split} 
&\phi (p_n(A_1, E ,A_3,\cdots , A_n ) )-p_n( \phi (A_1 ),E ,A_3,\cdots , A_n  )\\&
\, = \phi (p_n(A_1E, E ,A_3,\cdots , A_n ) )-p_n(\phi(A_1E), E ,A_3,\cdots , A_n ) \\&
\quad + \phi (p_n(A_1E, F ,A_3,\cdots , A_n ) )  - p_n( \phi (A_1 E ),F,A_3,\cdots , A_n  ) \\&
\, =\phi (p_n(A_1E, I ,A_3,\cdots , A_n ) )  - p_n( \phi (A_1 E ),I,A_3,\cdots , A_n  ) \\&
\, =0.
\end{split}
\end{equation*}
Since every finite-rank operator on $\mathcal{X}$ is a linear combination of rank-one idempotents (see \cite[Lemma 1.1]{bur}), we arrive at 
\[ \phi (p_n(A_1, T ,A_3,\cdots , A_n ) )  = p_n( \phi (A_1  ),T,A_3,\cdots , A_n  ) \]
for all $A_1, A_3,\cdots , A_n \in \U$ and $T\in F(\mathcal{X})$.  By the fact that $p_n(A_1, A_2,\cdots , A_n )   = p_{n-1}([A_1 ,A_2],\cdots , A_n  ) $ we have
\begin{equation}\label{eq2}
\begin{split}
\phi (p_{n-1}(A_1 T ,A_3,\cdots , A_n ) ) & =\phi (p_{n-1}(TA_1  ,A_3,\cdots , A_n ) )\\&
\quad +p_{n-1}( \phi (A_1) T,A_3,\cdots , A_n  )\\&
\quad -p_{n-1}(T \phi (A_1) ,A_3,\cdots , A_n  )
\end{split}
\end{equation}
for all $A_1, A_3,\cdots , A_n \in \U$ and $T\in F(\mathcal{X})$. 
\par 
Let $A_1, A_2,\cdots , A_n \in \U$ and $T\in F(\mathcal{X})$. By applying \eqref{eq2}, we have
\begin{equation*}
\begin{split}
\phi (p_{n-1}(A_1 A_2 T ,A_3,\cdots , A_n ) ) & =\phi (p_{n-1}(TA_1 A_2 ,A_3,\cdots , A_n ) )\\&
\quad+p_{n-1}( \phi (A_1 A_2)T,A_3,\cdots , A_n  )\\&
\quad -p_{n-1}( T\phi (A_1 A_2) ,A_3,\cdots , A_n  )
\end{split}
\end{equation*}
and on the other hand 
\begin{equation*}
\begin{split}
\phi (p_{n-1}(A_1 A_2 T ,A_3,\cdots , A_n ) ) & =\phi (p_{n-1}(A_2TA_1  ,A_3,\cdots , A_n ) )\\&
\quad+p_{n-1}( \phi (A_1 )A_2T,A_3,\cdots , A_n  )\\&
\quad -p_{n-1}( A_2T\phi (A_1 ) ,A_3,\cdots , A_n  )\\&
=\phi (p_{n-1}(TA_1 A_2 ,A_3,\cdots , A_n ) )\\&
\quad +p_{n-1}( \phi (A_2 )TA_1,A_3,\cdots , A_n  )\\&
\quad- p_{n-1}( TA_1\phi (A_2) ,A_3,\cdots , A_n  )\\&
\quad+p_{n-1}( \phi (A_1 )A_2T,A_3,\cdots , A_n  )\\&
\quad -p_{n-1}( A_2T\phi (A_1 ) ,A_3,\cdots , A_n  )
\end{split}
\end{equation*}
By comparing the two expressions for $\phi (p_{n-1}(A_1 A_2 T ,A_3,\cdots , A_n ) )$, we see that
\begin{equation*}
\begin{split}
&p_{n-1}( \phi (A_1 A_2)T,A_3,\cdots , A_n  )-p_{n-1}( T\phi (A_1 A_2) ,A_3,\cdots , A_n  )\\&
\,\, -p_{n-1}( \phi (A_2 )TA_1,A_3,\cdots , A_n  )+ p_{n-1}( TA_1\phi (A_2) ,A_3,\cdots , A_n  )\\&
\,\,-p_{n-1}( \phi (A_1 )A_2T,A_3,\cdots , A_n  ) +p_{n-1}( A_2T\phi (A_1 ) ,A_3,\cdots , A_n  )=0
\end{split}
\end{equation*}
Since $\overline{F(\mathcal{X})}^{SOT}=B(\mathcal{X})$, there is a net $(T_{\alpha})_{\alpha\in\Gamma}$ in $F(\mathcal{X})$ converges to $I$ with respect to the strong operator topology. By the fact that the product in $B(\mathcal{X})$ is separately continuous with respect to the strong operator topology we get
\begin{equation*}
\begin{split}
& -p_{n-1}( \phi (A_2 )A_1,A_3,\cdots , A_n  )+ p_{n-1}( A_1\phi (A_2) ,A_3,\cdots , A_n  )\\&
\,\,-p_{n-1}( \phi (A_1 )A_2,A_3,\cdots , A_n  ) +p_{n-1}( A_2\phi (A_1 ) ,A_3,\cdots , A_n  )=0
\end{split}
\end{equation*}
By letting $A:=A_1=A_2=\cdots = A_n$ we have  
\[p_{n-1}( [A,\phi (A)] ,A,\cdots , A  )=0.\]
So
\[ p_{n}( \phi (A) ,A,\cdots , A  )=0\]
for all $A\in \U$, ans $\phi$ is an $n$-commuting map.
\par 
Now we assume that $\phi$ satisfies $\mathbf{(T_r)}$. For $A_2, A_3,\cdots , A_n \in \U$ and a rank-one idempotent $E\in \U$ we have $E(FA_2)=0$ and $F(EA_2)=0$, where  $F:=I-E$. By the hypothesis we see that
\[ \phi (p_n(E, FA_2 ,A_3,\cdots , A_n ) )  = p_n( E,\phi (FA_2 ),A_3,\cdots , A_n  ) \]
and
\[ \phi (p_n(F, EA_2 ,A_3,\cdots , A_n ) )  = p_n( F,\phi (EA_2 ),A_3,\cdots , A_n  ). \]
Hence
\begin{equation*}
\begin{split}
&\phi (p_n(E, A_2 ,A_3,\cdots , A_n ) )  - p_n( E,\phi (A_2 ),A_3,\cdots , A_n  ) \\&
\, =\phi (p_n(E, EA_2 ,A_3,\cdots , A_n ) )  +\phi (p_n(E, FA_2 ,A_3,\cdots , A_n ) ) \\&
\quad -p_n( E,\phi (EA_2 ),A_3,\cdots , A_n  ) -p_n( E,\phi (FA_2 ),A_3,\cdots , A_n  )\\&
\,= \phi (p_n(E, EA_2 ,A_3,\cdots , A_n ) ) -p_n( E,\phi (EA_2 ),A_3,\cdots , A_n  )\\&
\quad +\phi (p_n(F, EA_2 ,A_3,\cdots , A_n ) )  - p_n( F,\phi (EA_2 ),A_3,\cdots , A_n  )\\&
\,= \phi (p_n(I, EA_2 ,A_3,\cdots , A_n ) )  - p_n( I,\phi (EA_2 ),A_3,\cdots , A_n  )\\&
\,=0.
\end{split}
\end{equation*}
So
\[\phi (p_n(T, A_2 ,A_3,\cdots , A_n ) )  = p_n( T,\phi (A_2 ),A_3,\cdots , A_n  ) \]
for all $A_1, A_3,\cdots , A_n \in \U$ and $T\in F(\mathcal{X})$. 
 Thus 
\begin{equation}\label{eq3}
\begin{split}
\phi (p_{n-1}(A_2 T ,A_3,\cdots , A_n ) ) & =\phi (p_{n-1}(TA_2  ,A_3,\cdots , A_n ) )\\&
\quad +p_{n-1}( \phi (A_2) T,A_3,\cdots , A_n  )\\&
\quad -p_{n-1}(T \phi (A_2) ,A_3,\cdots , A_n  )
\end{split}
\end{equation}
for all $A_2, A_3,\cdots , A_n \in \U$ and $T\in F(\mathcal{X})$.  Equation~\eqref{eq3} is the same as Equation~\eqref{eq2}. So with a process similar to the proof of the previous case, we get the conclusion. 
\par 
If $\phi$ achieves one of the conditions $\mathbf{(N)}$, $\mathbf{(N_l)}$, $\mathbf{(N_r)}$ or $\mathbf{(T)}$, then $\mathbf{(T_l)}$ or $\mathbf{(T_r)}$ is valid for it, and therefore, under these conditions, $\phi$ is an $n$-commuting map. Proof of the theorem is complete.
\end{proof}
%%%_________________________________________________________________________________________________________
%%------------------------------------------------------------------------------------------------------------------------------------------------------------------------------
According to Example \ref{n-commuting not other}, the converse of the above theorem is not necessarily true. In the next theorem, we see that conditions $\mathbf{(T)}$, $\mathbf{(T_l)}$ and $\mathbf{(T_r)}$ for a linear mapping on a standard operator algebra are equivalent, and we also characterize the valid mapping in these conditions on a standard operator algebra.
\begin{thm}\label{conditions T}
Let $\U$ contain the identity operator $I$. Suppose that $\phi :\U \rightarrow \U$ is a linear map. The following are equivalent for $n\geq 2$:
\begin{itemize}
\item[(i)] $\phi$ satisfies $\mathbf{(T)}$;
\item[(ii)] $\phi$ satisfies $\mathbf{(T_l)}$;
\item[(iii)] $\phi$ satisfies $\mathbf{(T_r)}$;
\item[(iv)] There exist an element $\lambda \in \mathbb{C}$ and a linear map $\mu :\U \rightarrow \mathbb{C}$ such that $\phi(A)=\lambda A +\mu (A)I$ for all $A\in \U$ and $\mu (p_n(A_1, A_2,\cdots , A_n ))=0$ for all $A_1, A_2,\cdots , A_n \in \U$ with $A_1 A_2=0$.
\end{itemize}
\end{thm}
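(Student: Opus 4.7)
The plan is to close the cycle (i) $\Rightarrow$ (ii) $\Rightarrow$ (iv) $\Rightarrow$ (i) together with (i) $\Rightarrow$ (iii) $\Rightarrow$ (iv), exploiting the earlier results maximally so that very little new work is needed here. The implications (i) $\Rightarrow$ (ii) and (i) $\Rightarrow$ (iii) are immediate since $\mathbf{(T)}$ is literally the conjunction of $\mathbf{(T_l)}$ and $\mathbf{(T_r)}$. The implication (iv) $\Rightarrow$ (i) is a direct substitution: for any $A_1, A_2, \dots, A_n \in \U$ with $A_1 A_2 = 0$, I would expand $\phi(p_n(A_1,\dots,A_n))=\lambda p_n(A_1,\dots,A_n)+\mu(p_n(A_1,\dots,A_n))I$ and use the hypothesis $\mu(p_n(A_1,\dots,A_n))=0$, then observe that substituting $\phi(A_1)=\lambda A_1+\mu(A_1)I$ (resp.\ $\phi(A_2)$) into $p_n(\phi(A_1),A_2,\dots,A_n)$ (resp.\ $p_n(A_1,\phi(A_2),\dots,A_n)$) kills the $\mu(A_i)I$ term because $I$ is central, leaving exactly $\lambda\, p_n(A_1,\dots,A_n)$.

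The substantive content lies in (ii) $\Rightarrow$ (iv) and (iii) $\Rightarrow$ (iv), and here the bulk of the work has already been done. From Theorem \ref{conditions n-commuting}, $\mathbf{(T_l)}$ (or $\mathbf{(T_r)}$) forces $\phi$ to be an $n$-commuting linear map on the unital standard operator algebra $\U$. Lemma \ref{n-comuting standard} then yields the structural form $\phi(A)=\lambda A+\mu(A)I$ for some $\lambda\in\mathbb{C}$ and some linear $\mu:\U\to\mathbb{C}$. So only the side condition on $\mu$ remains.

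To extract this condition, I would take any $A_1,\dots,A_n\in\U$ with $A_1A_2=0$ and substitute the formula into $\mathbf{(T_l)}$: the left-hand side becomes $\lambda p_n(A_1,\dots,A_n)+\mu(p_n(A_1,\dots,A_n))I$, while the right-hand side $p_n(\phi(A_1),A_2,\dots,A_n)=p_n(\lambda A_1+\mu(A_1)I,A_2,\dots,A_n)$ collapses to $\lambda p_n(A_1,\dots,A_n)$ because $I$ commutes with $A_2$ and the commutator is taken against $A_2$ first. Comparing the two expressions yields $\mu(p_n(A_1,\dots,A_n))I=0$, hence $\mu(p_n(A_1,\dots,A_n))=0$. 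The argument from $\mathbf{(T_r)}$ is identical, using $p_n(A_1,\phi(A_2),\dots,A_n)$ and the fact that $[A_1,I]=0$.

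The only place a reader might pause is in checking that the $I$-term really does vanish inside $p_n$; this is the basic identity $[X,I]=0$ applied at the innermost commutator, after which the outer commutators propagate zeros. There is no genuine obstacle, since Theorem \ref{conditions n-commuting} and Lemma \ref{n-comuting standard} have already absorbed all the hard operator-theoretic content, and what remains is routine algebra comparing both sides of $\phi(p_n)=p_n(\phi,\dots)$ once the structural form of $\phi$ is in hand.
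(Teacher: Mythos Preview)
Your proposal is correct and follows essentially the same route as the paper: the trivial implications are noted, and for (ii)$\Rightarrow$(iv) (respectively (iii)$\Rightarrow$(iv)) one invokes Theorem~\ref{conditions n-commuting} and Lemma~\ref{n-comuting standard} to obtain $\phi(A)=\lambda A+\mu(A)I$, then substitutes this form back into $\mathbf{(T_l)}$ (respectively $\mathbf{(T_r)}$) to deduce the vanishing condition on $\mu$. The paper's computation is arranged as $\mu(p_n)=\phi(p_n)-\lambda p_n=p_n(\phi(A_1),A_2,\dots,A_n)-\lambda p_n=0$, which is exactly your comparison written in a slightly different order.
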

\begin{proof}
(i)$\Rightarrow$(ii), (i)$\Rightarrow$(iii), (iv)$\Rightarrow$(i), (iv)$\Rightarrow$(ii) and (iv)$\Rightarrow$(iii) are clear.
\par 
(ii)$\Rightarrow$(iv): By Theorem~\ref{conditions $n$-commuting}, $\phi$ is an n-commuting linear map. From Lemma \ref{n-comuting standard} it follows that $\phi(A)=\lambda A +\mu(A)I$ ($A\in \mathcal{U}$), where $\lambda\in \mathbb{C}$  and $\mu$ is a linear map from $\U$ into $\mathbb{C}$. Now for any $A_1, A_2,\cdots , A_n \in \U$ with $A_1 A_2=0$. we have
\begin{equation*}
\begin{split}
\mu (p_n(A_1, A_2,\cdots , A_n ))&=\phi(p_n(A_1, A_2,\cdots , A_n ))-\lambda p_n(A_1, A_2,\cdots , A_n )\\&
=p_n(\phi (A_1 ),A_2,\cdots , A_n  )-\lambda p_n(A_1, A_2,\cdots , A_n ) \\&
=p_n(\mu ( A_1 ),A_2,\cdots , A_n  )+p_n(\lambda A_1, A_2,\cdots , A_n )\\&
\quad -\lambda p_n(A_1, A_2,\cdots , A_n )  =0.
\end{split}
\end{equation*}
(iii)$\Rightarrow$(iv): Similar to (ii)$\Rightarrow$(iv), the proof is obtained. 
\end{proof}
%%---------------------------------------------------------------------------------------------------------------------------------------------------------------------------------
%%_____________________________________________________________________________________________________________________
The following theorem is about conditions $\mathbf{(N)}$, $\mathbf{(N_l)}$ and $\mathbf{(N_r)}$ on standard operator algebras, which is proved in the same way as Theorem \ref{conditions T}.
\begin{thm}\label{conditions N}
Let $\U$ contain the identity operator $I$. Suppose that $\phi :\U \rightarrow \U$ is a linear map. The following are equivalent for $n\geq 2$:
\begin{itemize}
\item[(i)] $\phi$ satisfies $\mathbf{(N)}$;
\item[(ii)] $\phi$ satisfies $\mathbf{(N_l)}$;
\item[(iii)] $\phi$ satisfies $\mathbf{(N_r)}$;
\item[(iv)] There exist an element $\lambda \in \mathbb{C}$ and a linear map $\mu :\U \rightarrow \mathbb{C}$ such that $\phi(A)=\lambda A +\mu (A)I$ for all $A\in \U$ and $\mu (p_n(A_1, A_2,\cdots , A_n ))=0$ for all $A_1, A_2,\cdots , A_n \in \U$ with $A_1 A_2\cdots  A_n =0$.
\end{itemize}
\end{thm}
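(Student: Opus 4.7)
The plan is to mirror the proof of Theorem \ref{conditions T}, making only the adjustment that the product hypothesis is now $A_1A_2\cdots A_n=0$ instead of $A_1A_2=0$. The implications (i)$\Rightarrow$(ii), (i)$\Rightarrow$(iii) follow at once by forgetting one of the two equalities bundled into $\mathbf{(N)}$. The implications (iv)$\Rightarrow$(i), (ii), (iii) are immediate from the centrality of $\mu(A_i)I$: substituting $\phi(A_i)=\lambda A_i+\mu(A_i)I$ into $p_n$ causes the $\mu(A_i)I$ summand to vanish because $[\mu(A_i)I,\cdot]=0$, so the desired identities reduce to $\mu(p_n(A_1,\ldots,A_n))=0$, which is exactly the hypothesis in (iv).

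For the nontrivial direction (ii)$\Rightarrow$(iv), first observe that $\mathbf{(N_l)}$ forces $\mathbf{(T_l)}$: if $A_1A_2=0$ then $A_1A_2\cdots A_n=0$ for arbitrary $A_3,\ldots,A_n$, so the hypothesis of $\mathbf{(N_l)}$ is automatically met. Hence Theorem \ref{conditions n-commuting} applies and $\phi$ is an $n$-commuting linear map. Lemma \ref{n-comuting standard} then delivers $\lambda\in\mathbb{C}$ and a linear $\mu:\U\to\mathbb{C}$ with $\phi(A)=\lambda A+\mu(A)I$ for every $A\in\U$. To finish, I take arbitrary $A_1,\ldots,A_n\in\U$ with $A_1A_2\cdots A_n=0$, apply $\mathbf{(N_l)}$, and substitute the structural form to obtain
\[
\lambda\, p_n(A_1,\ldots,A_n)+\mu(p_n(A_1,\ldots,A_n))I = p_n(\lambda A_1+\mu(A_1)I, A_2,\ldots,A_n) = \lambda\, p_n(A_1,\ldots,A_n),
\]
where the last equality uses that $[\mu(A_1)I,A_2]=0$ kills the central-valued summand. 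Since $I\neq 0$, this yields $\mu(p_n(A_1,\ldots,A_n))=0$, which is precisely the condition in (iv).

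The implication (iii)$\Rightarrow$(iv) is carried out in exactly the same way, invoking $\mathbf{(N_r)}$ instead and using $[A_1,\mu(A_2)I]=0$ to eliminate the central summand on the right-hand side. I do not anticipate any real obstacle: Theorem \ref{conditions n-commuting} already absorbs the substantive analytic content (rank-one idempotent reduction plus the SOT-density of $F(\mathcal{X})$ in $B(\mathcal{X})$), and the passage from $n$-commuting to the form $\lambda A+\mu(A)I$ is handled by Lemma \ref{n-comuting standard}; the only remaining work is the short bookkeeping computation above, which is formally identical to the corresponding step in Theorem \ref{conditions T}.
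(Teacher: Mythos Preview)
Your proposal is correct and follows essentially the same approach as the paper, which simply states that the theorem ``is proved in the same way as Theorem~\ref{conditions T}.'' Your explicit observation that $\mathbf{(N_l)}\Rightarrow\mathbf{(T_l)}$ (via $A_1A_2=0\Rightarrow A_1A_2\cdots A_n=0$) is a harmless clarification; in fact Theorem~\ref{conditions n-commuting} already lists $\mathbf{(N_l)}$ and $\mathbf{(N_r)}$ among its hypotheses, so you could invoke it directly.
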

%%____________________________________________________________________________________________________________________
%%------------------------------------------------------------------------------------------------------------------------------------------------------------------------------------------
%%------------------------------------------------------------------------------------------------------------------------------------------------------------------------------------------
We don't know if conditions $\mathbf{(N)}$, $\mathbf{(N_l)}$, $\mathbf{(N_r)}$, $\mathbf{(T)}$, $\mathbf{(T_l)}$ and $\mathbf{(T_r)}$ for linear mappings on unital standard operator algebras are equivalent or not for $n\geq 3$? So this case remains as a question. Also, if the answer to this question is negative, the next question that seems to be that on which unital standard operator algebras are the mentioned conditions for the linear mappings equivalent? In the last section, we will answer the second question on specific unital standard operator algebras.
\par 
As we said in the introduction, examples are presented in \cite{gh-jing} that show that conditions $\mathbf{(C)}$, $\mathbf{(C_1)}$, and $\mathbf{(C_2)}$ are not necessarily equivalent or do not necessarily result that a linear mapping is commuting. In the following, these questions have been raised; on which algebras are conditions $\mathbf{(C)}$, $\mathbf{(C_1)}$, and $\mathbf{(C_2)}$ equivalent, and on which algebras do these conditions result in a mapping being commuting? The results of this section for $n=2$ are actually a positive answer to these questions on standard operator algebras which are expressed as a result below. It should also be noted that this result is a generalization of \cite[Corollary 5.1]{fad3}.
\begin{cor}\label{conditions C}
Let $\U$ contain the identity operator $I$. Assume that $\phi :\U \rightarrow \U$ is a linear mapping. Then
\begin{itemize}
\item[(a)] if $\phi $ satisfies one of the conditions $\mathbf{(C)}$, $\mathbf{(C_1)}$ or $\mathbf{(C_2)}$ for $n\geq 2$, then $\phi$ is a commuting map;
\item[(b)] the following are equivalent:
\begin{itemize}
\item[(i)] $\phi$ satisfies $\mathbf{(C)}$;
\item[(ii)] $\phi$ satisfies $\mathbf{(C_1)}$;
\item[(iii)] $\phi$ satisfies $\mathbf{(C_2)}$;
\item[(iv)] There exist an element $\lambda \in \mathbb{C}$ and a linear map $\mu :\U \rightarrow \mathbb{C}$ such that $\phi(A)=\lambda A +\mu (A)I$ for all $A\in \U$ and $\mu ([A_1, A_2])=0$ for all $A_1, A_2  \in \U$ with $A_1 A_2 =0$.
\end{itemize}
\end{itemize}
\end{cor}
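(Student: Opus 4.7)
The corollary is, as the authors note, the special case $n=2$ of the preceding theorems: since $p_2(a_1,a_2) = [a_1,a_2]$, the hypothesis $a_1 a_2 = 0$ in both $\mathbf{(T)}$ and $\mathbf{(N)}$ coincide (both become the condition for $\mathbf{(C)}$), and similarly $\mathbf{(T_l)}$, $\mathbf{(N_l)}$ collapse to $\mathbf{(C_1)}$ and $\mathbf{(T_r)}$, $\mathbf{(N_r)}$ collapse to $\mathbf{(C_2)}$. So the plan is simply to translate the already-established Theorems \ref{conditions n-commuting}, \ref{conditions T} and \ref{conditions N} into this language.

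For part (a), I would invoke Theorem \ref{conditions n-commuting} with $n=2$. Under the identifications above, whichever of $\mathbf{(C)}$, $\mathbf{(C_1)}$, $\mathbf{(C_2)}$ holds, $\phi$ is a $2$-commuting map, which by definition means $p_2(\phi(a),a) = [\phi(a),a] = 0$ for every $a\in \U$; this is exactly the condition that $\phi$ is a commuting map.

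For part (b), the implications (i)$\Rightarrow$(ii), (i)$\Rightarrow$(iii), and the implications from (iv) to each of (i), (ii), (iii) are immediate from the definitions (with $\mu$ central-valued, the $\mu(A)I$-summand contributes nothing to any commutator, so the defining identities reduce to the trivial statement about $\lambda A$). For (ii)$\Rightarrow$(iv) and (iii)$\Rightarrow$(iv), I would apply Theorem \ref{conditions T} (equivalently Theorem \ref{conditions N}, which for $n=2$ has the same hypothesis $A_1A_2 = 0$) with $n=2$: it produces $\lambda\in\mathbb{C}$ and a linear $\mu:\U\to\mathbb{C}$ with $\phi(A) = \lambda A + \mu(A)I$ and $\mu([A_1,A_2]) = 0$ whenever $A_1A_2 = 0$.

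There is essentially no obstacle here since all the work has been done in the $n\geq 2$ generality; the corollary just records the payoff at $n=2$ and observes that it recovers and extends \cite[Corollary 5.1]{fad3}, which treated only condition $\mathbf{(C)}$, by showing that the formally weaker one-sided conditions $\mathbf{(C_1)}$ and $\mathbf{(C_2)}$ already suffice on unital standard operator algebras.
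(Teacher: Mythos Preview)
Your proposal is correct and matches the paper's approach exactly: the corollary is stated without explicit proof in the paper, as it is simply the specialization $n=2$ of Theorems \ref{conditions n-commuting} and \ref{conditions T} (equivalently \ref{conditions N}), and your identification of $\mathbf{(T)},\mathbf{(T_l)},\mathbf{(T_r)}$ and $\mathbf{(N)},\mathbf{(N_l)},\mathbf{(N_r)}$ with $\mathbf{(C)},\mathbf{(C_1)},\mathbf{(C_2)}$ at $n=2$ is precisely the intended reduction.
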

%%-------------------------------------------------------------------------------------------------------------------------------------------------------------------------------------------
%%______________________________________________________________________________________________________________________
At the end of this section, as an application of the obtained results, we present some variants of Posner's second theorem on standard operator algebras.
\begin{thm}\label{posner}
Let $\U$ contain the identity operator $I$. Suppose that $\delta :\U \rightarrow \U$ is a linear derivation. The following are equivalent for $n\geq 2$:
\begin{itemize}
\item[(i)] $\delta$ is a Lie $n$-centralizer
\item[(ii)] $\delta$ satisfies $\mathbf{(T)}$;
\item[(iii)] $\delta$ satisfies $\mathbf{(T_l)}$;
\item[(iv)] $\delta$ satisfies $\mathbf{(T_r)}$;
\item[(v)] $\delta$ satisfies $\mathbf{(N)}$;
\item[(vi)] $\delta$ satisfies $\mathbf{(N_l)}$;
\item[(vii)] $\delta$ satisfies $\mathbf{(N_r)}$;
\item[(viii)] $\delta$ is an $n$-commuting map;
\item[(ix)] $\delta$ is a commuting map;
\item[(x)] $\delta$ is a centralizing map;
\item[(xi)] $\delta =0$.
\end{itemize}
\end{thm}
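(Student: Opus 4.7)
The plan is to close a cycle among (i)--(xi),
\[
\text{(i)}\Rightarrow\text{(ii)--(vii)}\Rightarrow\text{(viii)}\Rightarrow\text{(ix)}\Rightarrow\text{(x)}\Rightarrow\text{(xi)}\Rightarrow\text{(i)},
\]
in which almost every implication is immediate and the few nontrivial ones reduce to results already in hand. The implications (xi)$\Rightarrow$everything and (ix)$\Rightarrow$(x) are trivial, the latter because $0\in\mathrm{Z}(\U)$. The implications (i)$\Rightarrow$(ii)--(vii) are also immediate: a Lie $n$-centralizer satisfies $\phi(p_n(a_1,\ldots,a_n))=p_n(\phi(a_1),a_2,\ldots,a_n)=p_n(a_1,\phi(a_2),\ldots,a_n)$ for \emph{all} tuples, hence in particular under any zero-product hypothesis.

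The content lies in the three steps (ii)--(vii)$\Rightarrow$(viii), (viii)$\Rightarrow$(ix) and (x)$\Rightarrow$(xi). The first is precisely Theorem~\ref{conditions n-commuting}. For the second, apply Lemma~\ref{n-comuting standard} to the $n$-commuting linear map $\delta$: there exist $\lambda\in\mathbb{C}$ and a linear functional $\mu:\U\to\mathbb{C}$ with $\delta(A)=\lambda A+\mu(A)I$ for all $A\in\U$, and since $I$ is central,
\[
[\delta(A),A]=\lambda[A,A]+\mu(A)[I,A]=0
\]
for every $A\in\U$, so $\delta$ is a commuting map. The third step is Posner's second theorem (Theorem~\ref{pos2}): the algebra $\U$ is prime (as recalled in Section~2) and non-commutative (since $\dim\mathcal{X}\geq 2$), hence a centralizing derivation on $\U$ must vanish.

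No serious obstacle is expected, because the hypotheses of the theorem are exactly those needed by each invoked result: the unital standard operator algebra $\U$ is prime with $\mathrm{Z}(\U)=\mathbb{C}I$ and non-commutative, and $\delta$ is already assumed to be a linear derivation. The only nontrivial external input is Posner's classical theorem; all remaining implications are either definitional or a direct appeal to Theorem~\ref{conditions n-commuting} and Lemma~\ref{n-comuting standard}, so the cycle closes cleanly.
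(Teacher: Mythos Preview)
Your proof is correct and follows essentially the same cycle as the paper's: (i)--(vii) $\Rightarrow$ (viii) via Theorem~\ref{conditions n-commuting}, (viii) $\Rightarrow$ (ix) via Lemma~\ref{n-comuting standard}, (x) $\Rightarrow$ (xi) via Posner's theorem, and (xi) $\Rightarrow$ everything trivially. The only difference is that the paper invokes Lemma~\ref{centralizing} for the equivalence (ix) $\Leftrightarrow$ (x), whereas you observe---more economically---that (ix) $\Rightarrow$ (x) is immediate since $0\in\mathrm{Z}(\U)$, which already suffices to close the cycle.
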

\begin{proof}
According to Theorem \ref{conditions n-commuting}, conditions (i) to (vii) result in condition (viii). It clearly follows from Lemma \ref{n-comuting standard} that an $n$-commuting linear map is a commuting linear map. So (viii)$\Rightarrow$(ix) is obtained. Since $dim \mathcal{X}\geq2$, then the algebra $\U$ is non-commutative and (x)$\Rightarrow$(xi) is clear according to Theorem \ref{pos2}. (ix)$\Leftrightarrow$(x) is obtained from Lemma \ref{centralizing}. Finally, it is obvious that if $\delta = 0$, then all cases (i) to (ix) are true. In this way, the proof of the theorem is complete.
\end{proof}
The question that naturally arises is whether the same as Theorem \ref{posner} holds for additive derivations on a prime ring with suitable torsion restrictions?
%%_____________________________________________________________________________________________________
%%_____________________________________________________________________________________________________
 % ------------------------------------------------------------------------
%%______________________________________________________________________________
%%******************************************************************************************
%%&&&&&&&&&&&&&&*****************************************************************************&&&&&&&&&&&
\section{\bf Lie $n$-centralizers on standard operator algebras}
In this section, we will study the $n$-commuting linear mappings and Lie $n$-centralizers on unital and non-unital standard operator algebras. 
\begin{prop}\label{n-lie unital}
Let $\U$ contain the identity operator $I$. Suppose that $\phi :\U \rightarrow \U$ is a linear map. The following are equivalent for $n\geq 2$:
\begin{itemize}
\item[(i)] $\phi$ is a Lie $n$-centralizer;
\item[(ii)] There exist an element $\lambda \in \mathbb{C}$ and a linear map $\mu :\U \rightarrow \mathbb{C}$ such that $\phi(A)=\lambda A +\mu (A)I$ for all $A\in \U$ and $\mu (p_n(A_1, A_2,\cdots , A_n ))=0$ for all $A_1, A_2,\cdots , A_n \in \U$.
\end{itemize}
\end{prop}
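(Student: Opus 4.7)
The plan is to handle the equivalence in the obvious two directions, with the forward implication being the substantive one that reduces to an application of Lemma \ref{n-comuting standard}.

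For the easy direction (ii)$\Rightarrow$(i), I would just compute: if $\phi(A) = \lambda A + \mu(A)I$ with $\mu\circ p_n \equiv 0$, then since $I$ is central we have $p_n(\phi(A_1),A_2,\ldots,A_n) = p_n(\lambda A_1 + \mu(A_1)I, A_2, \ldots, A_n) = \lambda\, p_n(A_1,A_2,\ldots,A_n)$, because any commutator involving the central summand $\mu(A_1)I$ vanishes. On the other hand, $\phi(p_n(A_1,\ldots,A_n)) = \lambda\, p_n(A_1,\ldots,A_n) + \mu(p_n(A_1,\ldots,A_n))I = \lambda\, p_n(A_1,\ldots,A_n)$ by the vanishing hypothesis on $\mu$. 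So the two sides match and $\phi$ is a Lie $n$-centralizer.

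For (i)$\Rightarrow$(ii), I would first observe that any Lie $n$-centralizer is automatically an $n$-commuting linear map: setting $A_1 = A_2 = \cdots = A_n = A$ in the defining identity gives $\phi(p_n(A,A,\ldots,A)) = p_n(\phi(A),A,\ldots,A)$, and the left side is $\phi(0) = 0$ because $p_n(A,A,\ldots,A) = [[\ldots[[A,A],A],\ldots],A] = 0$. Hence $p_n(\phi(A),A,\ldots,A) = 0$ for every $A\in\U$. Now Lemma \ref{n-comuting standard} applies directly and yields $\lambda\in\mathbb{C}$ and a linear $\mu: \U\to\mathbb{C}$ with $\phi(A) = \lambda A + \mu(A)I$ for all $A\in\U$.

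It remains to upgrade the vanishing property of $\mu$ from the ``zero product'' version (given by Theorem \ref{conditions N}) to the unconditional version required here. The argument is just to substitute the standard form back into the Lie $n$-centralizer identity: for arbitrary $A_1,\ldots,A_n\in\U$,
\[
\phi(p_n(A_1,\ldots,A_n)) = \lambda\, p_n(A_1,\ldots,A_n) + \mu(p_n(A_1,\ldots,A_n))\,I,
\]
whereas, as in the easy direction, $p_n(\phi(A_1),A_2,\ldots,A_n) = \lambda\, p_n(A_1,\ldots,A_n)$ because the central summand $\mu(A_1)I$ contributes zero to the commutator. Equating the two expressions leaves $\mu(p_n(A_1,\ldots,A_n))\,I = 0$, and since $I\neq 0$ we conclude $\mu(p_n(A_1,\ldots,A_n)) = 0$ for all $A_1,\ldots,A_n\in\U$, as required.

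There is no real obstacle here; the content of the proposition is entirely carried by Lemma \ref{n-comuting standard}. The only mild subtlety is remembering that a Lie $n$-centralizer is automatically $n$-commuting, which unlocks that lemma without needing to invoke Theorem \ref{conditions N} at all (although one could equally well route through Theorem \ref{conditions N} since a Lie $n$-centralizer trivially satisfies $\mathbf{(N)}$).
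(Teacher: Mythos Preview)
Your proof is correct. The only difference from the paper's argument is in the routing of (i)$\Rightarrow$(ii): the paper observes that a Lie $n$-centralizer trivially satisfies condition $\mathbf{(T)}$ and then invokes Theorem~\ref{conditions T} to obtain the standard form $\phi(A)=\lambda A+\mu(A)I$, whereas you bypass Theorem~\ref{conditions T} entirely by noting directly that a Lie $n$-centralizer is $n$-commuting (via $A_1=\cdots=A_n=A$) and applying Lemma~\ref{n-comuting standard}. Your route is marginally more self-contained, since Theorem~\ref{conditions T} itself ultimately rests on Theorem~\ref{conditions n-commuting} and Lemma~\ref{n-comuting standard}; the paper's route, on the other hand, makes the logical dependence on the earlier structural results explicit. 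Either way the substance is identical, and your final substitution step to extract $\mu(p_n(A_1,\ldots,A_n))=0$ is exactly what the paper means by ``similar to the proof of Theorem~\ref{conditions T}.''
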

\begin{proof}
(i)$\Rightarrow$(ii): Since $\phi$ is a Lie $n$-centralizer, it follows that $\phi$ satisfies $\mathbf{(T)}$. So by Theorem \ref{conditions T} there is an element $\lambda \in \mathbb{C}$ and a linear map $\mu :\U \rightarrow \mathbb{C}$ such that $\phi(A)=\lambda A +\mu (A)$ for all $A\in \U$. Finally, similar to the proof of Theorem \ref{conditions T}, it is shown that $\mu (p_n(A_1, A_2,\cdots , A_n ))=0$ for all $A_1, A_2,\cdots , A_n \in \U$.
\par 
(ii)$\Rightarrow$(i): is clear.
\end{proof}
%%_______________________________________________________________________________________________________________________
%%--------------------------------------------------------------------------------------------------------------------------------------------------------------------------------------------
The above result has been obtained for $n=2$ on the unital standard operator algebras in \cite[Corollary 5.2]{fad3}. Therefore, this result is a generalization of \cite[Corollary 5.1]{fad3}.
\par 
As mentioned in Section 3, we do not know whether conditions $\mathbf{(N)}$, $\mathbf{(N_l)}$, $\mathbf{(N_r)}$, $\mathbf{(T)}$, $\mathbf{(T_l)}$ and $\mathbf{(T_r)}$ are equivalent for linear mappings on unital standard operator algebras or not? In the following, in certain cases, we show that these conditions are equivalent for linear mappings, and we show even more than this. In fact, in these special cases, we show that these conditions are equivalent for linear mappings with the condition that the mappings are Lie $n$-centralizers, and we also characterize linear mappings under these conditions. The following lemma plays an essential role in proving the next two results.
%%___________________________________________________________________________________________________________________
\begin{lem}\label{central map}
Let $\U$ be a unital standard operator algebra over a complex Banach space $\mathcal{X}$ with $dim \mathcal{X}\geq2$. Suppose that $\mu: \U\rightarrow \mathbb{C}$ be a linear map such that $\mu(p_n(A_1,A_2, \cdots A_n))=0$ for $n\geq 2$, $A_1, A_2, \cdots ,A_n\in \U$ with $A_1 A_2 =0$. Then 
\[\mu(p_n(A_1,E,A_3, \cdots A_n))= \mu(p_n(E,A_2,A_3 \cdots A_n))=0\]
for all idempotents $E\in \U$ and all $A_1, A_2, \cdots ,A_n\in \U$.
\end{lem}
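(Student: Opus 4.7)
The plan is to exploit the Peirce decomposition of $\U$ with respect to the idempotent $E$ and its complement $F := I - E$. Since $\U$ is unital, every $A \in \U$ decomposes as $A = EAE + EAF + FAE + FAF$, and $EF = FE = 0$. A short direct calculation shows that $[EAE, E] = 0 = [FAF, E]$, while $[EAF, E] = -EAF$ and $[FAE, E] = FAE$. Consequently, for any $A_1 \in \U$,
\begin{equation*}
[A_1, E] = -EA_1F + FA_1E,
\end{equation*}
and by an analogous computation,
\begin{equation*}
[E, A_2] = EA_2F - FA_2E.
\end{equation*}

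The central observation is that both $EA_1F$ and $FA_1E$ annihilate suitable idempotents on the right: $(EA_1F)\cdot E = 0$ and $(FA_1E)\cdot F = 0$. Hence, applying the hypothesis with $B_1 = EA_1F$, $B_2 = E$, and with $B_1 = FA_1E$, $B_2 = F$ (and $B_i = A_i$ for $i \ge 3$), I obtain
\begin{equation*}
\mu(p_n(EA_1F, E, A_3, \ldots, A_n)) = 0 = \mu(p_n(FA_1E, F, A_3, \ldots, A_n)).
\end{equation*}
Using $p_n(X, Y, A_3, \ldots, A_n) = p_{n-1}([X, Y], A_3, \ldots, A_n)$ together with $[EA_1F, E] = -EA_1F$ and $[FA_1E, F] = -FA_1E$, these vanishings translate to
\begin{equation*}
\mu(p_{n-1}(EA_1F, A_3, \ldots, A_n)) = 0 = \mu(p_{n-1}(FA_1E, A_3, \ldots, A_n)).
\end{equation*}

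Now I combine the pieces. Writing
\begin{equation*}
p_n(A_1, E, A_3, \ldots, A_n) = p_{n-1}([A_1, E], A_3, \ldots, A_n) = p_{n-1}(-EA_1F + FA_1E, A_3, \ldots, A_n),
\end{equation*}
and applying $\mu$, linearity and the identities above give $\mu(p_n(A_1, E, A_3, \ldots, A_n)) = 0$. The second claim follows by the same argument applied to $[E, A_2] = EA_2F - FA_2E$, once I replace $A_1$ by $A_2$ in the preceding step (the right-annihilator relations $(EA_2F)E = 0$ and $(FA_2E)F = 0$ still hold).

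The argument is essentially bookkeeping: the only delicate point is recognizing that although the hypothesis looks restrictive (requiring a zero product on the first two arguments of $p_n$), one can "activate" it twice by pairing the off-diagonal Peirce components $EA_iF$ and $FA_iE$ with the idempotents $E$ and $F$ respectively. I expect no real obstacle; the diagonal parts $EA_iE$ and $FA_iF$ drop out for free because they commute with $E$, so the whole statement rests on the two instances of the hypothesis described above.
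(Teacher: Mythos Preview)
Your proof is correct. Both you and the paper exploit the decomposition with respect to $E$ and $F = I - E$, but the mechanics differ. The paper uses the one-sided splitting $A_1 = A_1E + A_1F$: the hypothesis applied to $(A_1F)E = 0$ kills one summand, and then the hypothesis applied to $(A_1E)F = 0$ is used to add back a zero term so that the second slot recombines to $E + F = I$, whence $p_n(A_1E, I, A_3, \ldots, A_n) = 0$. You instead compute $[A_1, E] = -EA_1F + FA_1E$ directly and show that each off-diagonal Peirce component satisfies $\mu(p_{n-1}(EA_1F, A_3, \ldots, A_n)) = 0 = \mu(p_{n-1}(FA_1E, A_3, \ldots, A_n))$ via $(EA_1F)E = 0$ and $(FA_1E)F = 0$. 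Your route is slightly more computational but has the pleasant feature that the intermediate identities $\mu(p_{n-1}(EAF, \ldots)) = \mu(p_{n-1}(FAE, \ldots)) = 0$ immediately dispatch \emph{both} claims at once, whereas the paper repeats the combining-to-$I$ trick separately for the second claim using $E(FA_2) = 0$ and $F(EA_2) = 0$.
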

\begin{proof}
Suppose that $E\in \U$ is an idempotent, $F:=I-E$, and $A_1, A_2,\cdots , A_n \in \U$ are arbitrary elements. Since $(A_1F)E=(A_1 E)F=0$, it follows that 
\begin{equation*}
\begin{split}
\mu(p_n(A_1,E,A_3, \cdots A_n))&=\mu(p_n(A_1E,E,A_3, \cdots A_n))+\mu(p_n(A_1F,E,A_3, \cdots A_n))\\&
=\mu(p_n(A_1E,E,A_3, \cdots A_n))+\mu(p_n(A_1E,F,A_3, \cdots A_n))\\&
=\mu(p_n(A_1E,I,A_3, \cdots A_n))\\&
=0.
\end{split}
\end{equation*}
Considering that $E(FA_2)=F(EA_2)=0$ and using the same method as before, it is proved that
\[ \mu(p_n(E,A_2,A_3 \cdots A_n))=0.\]
\end{proof}
%%_______________________________________________________________________________________________________________________
In the next proposition, we show that for a linear mapping on the algebra of bounded operators on a complex Hilbert space, the conditions $\mathbf{(N)}$, $\mathbf{(N_l)}$, $\mathbf{(N_r)}$, $\mathbf{(T)}$, $\mathbf{(T_l)}$, $\mathbf{(T_r)}$ and that the mapping is Lie $n$-centralizer are equivalent.
\begin{prop}\label{hilbert}
Let $\mathcal{H}$ be a complex Hilbert space with $dim \mathcal{H}\geq2$. Suppose that $\phi :B(\mathcal{H}) \rightarrow B(\mathcal{H})$ is a linear mapping. The following are equivalent for $n\geq 2$:
\begin{itemize}
\item[(i)] $\phi$ is a Lie $n$-centralizer;
\item[(ii)] $\phi$ satisfies $\mathbf{(N)}$;
\item[(iii)] $\phi$ satisfies $\mathbf{(N_l)}$;
\item[(iv)] $\phi$ satisfies $\mathbf{(N_r)}$;
\item[(v)] $\phi$ satisfies $\mathbf{(T)}$;
\item[(vi)] $\phi$ satisfies $\mathbf{(T_l)}$;
\item[(vii)] $\phi$ satisfies $\mathbf{(T_r)}$;
\item[(viii)] There exist an element $\lambda \in \mathbb{C}$ and a linear map $\mu :B(\mathcal{H})\rightarrow \mathbb{C}$ such that $\phi(A)=\lambda A +\mu (A)I$ for all $A\in B(\mathcal{H})$ and $\mu (p_n(A_1, A_2,\cdots , A_n ))=0$ for all $A_1, A_2,\cdots , A_n \in B(\mathcal{H})$.
\end{itemize}
\end{prop}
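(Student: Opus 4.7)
The plan is to observe that almost all of the implications among (i)--(viii) follow immediately from earlier results in the paper: (i) clearly implies every one of (ii)--(vii); (viii) $\Rightarrow$ (i) is the already-proved Proposition~\ref{n-lie unital}; the equivalences (ii)$\iff$(iii)$\iff$(iv) and (v)$\iff$(vi)$\iff$(vii) are exactly Theorems~\ref{conditions N} and~\ref{conditions T}; and (iii)$\Rightarrow$(vi) is automatic from the definitions of the conditions ($A_1A_2=0$ forces $A_1A_2\cdots A_n=0$). Consequently the whole proposition reduces to a single nontrivial implication, namely (vi)$\Rightarrow$(viii).

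To establish (vi)$\Rightarrow$(viii), I would start from Theorem~\ref{conditions T}, which already gives the decomposition $\phi(A)=\lambda A+\mu(A)I$ together with the weaker vanishing property $\mu(p_n(A_1,\dots,A_n))=0$ whenever $A_1A_2=0$. The task is to upgrade this to unconditional vanishing of $\mu$ on all $(n-1)$-th commutators. Lemma~\ref{central map} does the first step: it shows that $\mu(p_n(A_1,E,A_3,\dots,A_n))=0$ for every idempotent $E\in B(\mathcal{H})$ and every choice of the remaining entries. Thus if $B(\mathcal{H})$ is spanned (as a complex vector space) by its idempotents, then linearity of $\mu$ and of $p_n$ in the second slot yields $\mu(p_n(A_1,A_2,A_3,\dots,A_n))=0$ for all $A_1,\dots,A_n\in B(\mathcal{H})$, which is exactly the extra requirement in (viii).

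The remaining point is therefore to invoke the well-known fact that $B(\mathcal{H})$ is the linear span of its idempotents. In the finite-dimensional case $B(\mathcal{H})\cong M_k(\mathbb{C})$ with $k\ge 2$, this is elementary: the diagonal matrix units $e_{ii}$ are idempotent, and for $i\ne j$ the operator $e_{ii}+e_{ij}$ is idempotent, so $e_{ij}=(e_{ii}+e_{ij})-e_{ii}$ is a difference of two idempotents. In the infinite-dimensional case one appeals to the Pearcy--Topping type theorem asserting that every element of $B(\mathcal{H})$ is a finite sum (hence a fortiori a finite linear combination) of idempotents; both cases can be cited as a single lemma and do not require a fresh argument.

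The main obstacle, and effectively the only substantive step, is therefore the passage from ``$\mu$ vanishes on $p_n$ whenever one entry is an idempotent'' to ``$\mu$ vanishes on every $p_n$''. This is handled by the linear-span observation just mentioned. Once that is in hand, the chain of implications (i)$\Rightarrow$(ii)$\Rightarrow$(iii)$\Rightarrow$(vi)$\Rightarrow$(viii)$\Rightarrow$(i) is closed, and combined with the two horizontal equivalences from Theorems~\ref{conditions T} and~\ref{conditions N} one obtains the full cycle of equivalences claimed in the proposition.
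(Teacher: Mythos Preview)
Your proposal is correct and follows essentially the same route as the paper's proof: reduce the cycle of equivalences to the single implication (vi)$\Rightarrow$(viii), apply Theorem~\ref{conditions T} to obtain the decomposition $\phi(A)=\lambda A+\mu(A)I$, invoke Lemma~\ref{central map} to get vanishing of $\mu$ on $p_n$ with an idempotent in the second slot, and then use that $B(\mathcal{H})$ is the linear span of its idempotents (elementary in the finite-dimensional case, Pearcy--Topping in the infinite-dimensional case). The only cosmetic difference is that the paper connects the $\mathbf{(N)}$-block to the $\mathbf{(T)}$-block via (ii)$\Rightarrow$(v) rather than your (iii)$\Rightarrow$(vi), which is the same trivial observation.
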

\begin{proof}
It is clear that (i) results in the rest of the cases. According to Theorem \ref{conditions N}, (ii), (iii), and (iv) are equivalent, and according to Theorem \ref{conditions T}, (v), (vi) and (vii) are equivalent. (i)$\Leftrightarrow$(vii) is obtained from Corollary \ref{n-lie unital}. (ii)$\Rightarrow$(v) is clear. So we just need to prove (vi)$\Rightarrow$(viii) to get the result.
\par 
Suppose that $\phi$ satisfies $\mathbf{(T_l)}$. It follows from Theorem \ref{conditions T} that there is an element $\lambda \in \mathbb{C}$ and a linear map $\mu :B(\mathcal{H}) \rightarrow \mathbb{C}$ such that $\phi(A)=\lambda A +\mu (A)I$ for all $A\in B(\mathcal{H})$ and $\mu (p_n(A_1, A_2,\cdots , A_n ))=0$ for all $A_1, A_2,\cdots , A_n \in B(\mathcal{H})$ with $A_1 A_2=0$. By Lemma \ref{central map} we have \[\mu(p_n(A_1,E,A_3, \cdots A_n))=0\]
for all idempotents $E\in B(\mathcal{H})$ and all $A_1, A_3, \cdots ,A_n\in B(\mathcal{H})$. If $\mathcal{H}$ is finite-dimensional, then $B(\mathcal{H})= M_n(\mathbb{C})$; the full matrix algebra over $\mathbb{C}$. Since each element of $M_n(\mathbb{C})$ is equal to a linear combination of its own idempotent elements, it follows that $\mu (p_n(A_1, A_2,\cdots , A_n ))=0$ for all $A_1, A_2,\cdots , A_n \in B(\mathcal{H})= M_n(\mathbb{C})$. Let $\mathcal{H}$ be infinite dimensional. By \cite{pe} every operator $A\in B(\mathcal{H})$ is a sum of five idempotents. Now we get $\mu (p_n(A_1, A_2,\cdots , A_n ))=0$ for all $A_1, A_2,\cdots , A_n \in B(\mathcal{H})$. So in any case we conclude that $\phi(A)=\lambda A +\mu (A)I$ for all $A\in B(\mathcal{H})$ and $\mu (p_n(A_1, A_2,\cdots , A_n ))=0$ for all $A_1, A_2,\cdots , A_n \in B(\mathcal{H})$.
\end{proof}
%%_____________________________________________________________________________________________________________________
For arbitrary unital standard operator algebras on Banach spaces, we have to assume the linear mappings which are  continuous under the strong operator topology. 
\begin{prop}\label{sot}
Let $\U$ be a unital standard operator algebra over a complex Banach space $\mathcal{X}$ with $dim \mathcal{X}\geq2$. Suppose that $\phi :\U \rightarrow \U$ is a continuous linear mapping under the strong operator topology. The following are equivalent for $n\geq 2$:
\begin{itemize}
\item[(i)] $\phi$ is a Lie $n$-centralizer
\item[(ii)] $\phi$ satisfies $\mathbf{(N)}$;
\item[(iii)] $\phi$ satisfies $\mathbf{(N_l)}$;
\item[(iv)] $\phi$ satisfies $\mathbf{(N_r)}$;
\item[(v)] $\phi$ satisfies $\mathbf{(T)}$;
\item[(vi)] $\phi$ satisfies $\mathbf{(T_l)}$;
\item[(vii)] $\phi$ satisfies $\mathbf{(T_r)}$;
\item[(viii)] There exist an element $\lambda \in \mathbb{C}$ and a linear map $\mu :\U\rightarrow \mathbb{C}$ such that $\phi(A)=\lambda A +\mu (A)I$ for all $A\in \U$ and $\mu (p_n(A_1, A_2,\cdots , A_n ))=0$ for all $A_1, A_2,\cdots , A_n \in \U$.
\end{itemize}
\end{prop}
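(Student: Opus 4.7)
The plan is to follow the template of Proposition \ref{hilbert}, with the SOT-continuity hypothesis replacing the ad hoc Hilbert-space facts used there. Most of the stated equivalences come for free from earlier results: (ii) $\Leftrightarrow$ (iii) $\Leftrightarrow$ (iv) is Theorem \ref{conditions N}, (v) $\Leftrightarrow$ (vi) $\Leftrightarrow$ (vii) is Theorem \ref{conditions T}, and (i) $\Leftrightarrow$ (viii) is Proposition \ref{n-lie unital}. The implications (i) $\Rightarrow$ (ii)--(vii) are immediate from the definition of a Lie $n$-centralizer, and (ii) $\Rightarrow$ (v) is simply $\mathbf{(N)} \Rightarrow \mathbf{(T)}$. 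So the only real content is to close the loop by proving one of the $\mathbf{(T)}$-type conditions implies (viii); I would prove (vi) $\Rightarrow$ (viii), and this is the only step where the SOT-continuity hypothesis is actually used.

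By Theorem \ref{conditions T}, condition $\mathbf{(T_l)}$ already gives $\phi(A) = \lambda A + \mu(A)I$ for some $\lambda \in \mathbb{C}$ and a linear $\mu : \U \to \mathbb{C}$ with $\mu(p_n(A_1,\ldots,A_n)) = 0$ whenever $A_1 A_2 = 0$. Lemma \ref{central map} then promotes this to
\[
\mu(p_n(A_1, E, A_3, \ldots, A_n)) = 0 \quad \text{for every idempotent } E \in \U
\]
and all $A_1, A_3, \ldots, A_n \in \U$. Since the map $T \mapsto \mu(p_n(A_1, T, A_3, \ldots, A_n))$ is linear in $T$, and since every element of $F(\mathcal{X})$ is a linear combination of rank-one idempotents (\cite[Lemma 1.1]{bur}, as used already in the proof of Theorem \ref{conditions n-commuting}), it follows that $\mu(p_n(A_1, T, A_3, \ldots, A_n)) = 0$ for all $T \in F(\mathcal{X})$ and arbitrary $A_1, A_3, \ldots, A_n \in \U$.

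The next step is to extend $T$ from $F(\mathcal{X})$ to an arbitrary $A_2 \in \U$; this is where SOT-continuity enters. From the identity $\mu(A)I = \phi(A) - \lambda A$, evaluating at any nonzero $x \in \mathcal{X}$ shows that if $A_\beta \to A$ in SOT then $\mu(A_\beta)x \to \mu(A)x$ in $\mathcal{X}$, so $\mu(A_\beta) \to \mu(A)$ in $\mathbb{C}$; that is, $\mu$ is itself SOT-continuous. Given arbitrary $A_1, A_2, \ldots, A_n \in \U$, pick a net $(T_\beta) \subseteq F(\mathcal{X}) \subseteq \U$ with $T_\beta \to A_2$ in SOT (possible because $\overline{F(\mathcal{X})}^{\mathrm{SOT}} = B(\mathcal{X})$). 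Because multiplication is separately SOT-continuous when the remaining factors are held fixed, $p_n(A_1, T_\beta, A_3, \ldots, A_n) \to p_n(A_1, A_2, A_3, \ldots, A_n)$ in SOT, and passing to the limit yields $\mu(p_n(A_1, A_2, \ldots, A_n)) = 0$, establishing (viii).

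The main obstacle is exactly this passage from idempotents to arbitrary operators. In Proposition \ref{hilbert} one can afford to ignore SOT-continuity because, by the Pearcy--Topping theorem, every element of $B(\mathcal{H})$ is a sum of five idempotents; on a general Banach space no such algebraic decomposition of elements of $\U$ into idempotents is available, so SOT-continuity of $\phi$ together with the density $\overline{F(\mathcal{X})}^{\mathrm{SOT}} = B(\mathcal{X})$ is the natural substitute and is the reason the extra hypothesis appears in the statement.
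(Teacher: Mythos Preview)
Your proposal is correct and follows essentially the same route as the paper: reduce to (vi) $\Rightarrow$ (viii), invoke Theorem \ref{conditions T} and Lemma \ref{central map} to get $\mu(p_n(A_1,T,A_3,\ldots,A_n))=0$ for $T\in F(\mathcal{X})$, then pass to arbitrary $A_2$ via SOT-density of $F(\mathcal{X})$ and separate SOT-continuity of multiplication. Your explicit verification that $\mu$ inherits SOT-continuity from $\phi$ via $\mu(A)I=\phi(A)-\lambda A$ is a welcome clarification of a step the paper leaves implicit.
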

\begin{proof}
Let $\phi$ satisfies $\mathbf{(T_l)}$. By Theorem \ref{conditions T} there is an element $\lambda \in \mathbb{C}$ and a linear map $\mu :\U\rightarrow \mathbb{C}$ such that $\phi(A)=\lambda A +\mu (A)I$ for all $A\in \U$ and $\mu (p_n(A_1, A_2,\cdots , A_n ))=0$ for all $A_1, A_2,\cdots , A_n \in \U$ with $A_1 A_2=0$. From Lemma \ref{central map} it follows that $\mu(p_n(A_1,E,A_3, \cdots A_n))=0$ for all idempotents $E\in \U$ and all $A_1, A_2, \cdots ,A_n\in \U$. Since every finite-rank operator on $\mathcal{X}$ is a linear combination of rank-one idempotents, we arrive at 
\[\mu(p_n(A_1,T,A_3, \cdots A_n))=0\]
for all $A_1, A_3,\cdots , A_n \in \U$ and $T\in F(\mathcal{X})$. Let $A_2 \in \U$. From the fact that $\overline{F(\mathcal{X})}^{SOT}=B(\mathcal{X})$, there is a net $(T_{\alpha})_{\alpha\in\Gamma}$ in $F(\mathcal{X})$ such that $T_{\alpha}\xrightarrow{SOT} A_2$. By the fact that the product in $B(\mathcal{X})$ is separately continuous with respect to the strong operator topology and $\phi$ is a continuous linear mapping under the strong operator topology, we get $\mu (p_n(A_1, A_2,\cdots , A_n ))=0$ for all $A_1, A_2,\cdots , A_n \in \U$. So we conclude that $\phi(A)=\lambda A +\mu (A)I$ for all $A\in \U$ and $\mu (p_n(A_1, A_2,\cdots , A_n ))=0$ for all $A_1, A_2,\cdots , A_n \in \U$. In this way, we obtain proof (vi)$\Rightarrow$(viii). Considering the same points mentioned at the beginning of the proof of Proposition \ref{hilbert}, the proof is completed.
\end{proof}
%%------------------------------------------------------------------------------------------------------------------------------------------------------------------------------------------
According to Example \ref{n-commuting not other}, an $n$-commuting linear map on a unital algebra is not necessarily a Lie $n$-centralizer. In the next result, we see that an $n$-commuting linear map is a Lie $n$-centralizer on non-unital standard operator algebras over infinite dimensional complex Banach spaces.
\begin{thm}\label{n-lie non-unital}
Let $\U$ be a standard operator algebra over a infinite dimensional complex Banach space $\mathcal{X}$ without the identity operator $I$. Assume that $\phi :\U \rightarrow \U$ is a linear map. The following are equivalent for $n\geq 2$:
\begin{itemize}
\item[(i)] $\phi$ is a Lie $n$-centralizer;
\item[(ii)] $\phi$ is an $n$-commuting map;
\item[(iii)] There exist an element $\lambda \in \mathbb{C}$ such that $\phi(A)=\lambda A $ for all $A\in \U$.
\end{itemize}
\end{thm}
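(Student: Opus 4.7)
The implications (iii)$\Rightarrow$(i)$\Rightarrow$(ii) are immediate: if $\phi(A)=\lambda A$, linearity of the commutator in the first slot gives $\phi(p_n(A_1,\ldots,A_n)) = \lambda\, p_n(A_1,\ldots,A_n) = p_n(\phi(A_1),A_2,\ldots,A_n)$, so $\phi$ is a Lie $n$-centralizer; and every Lie $n$-centralizer is trivially $n$-commuting. The real content is (ii)$\Rightarrow$(iii), and my plan is to reduce to the unital case via the unitization $\U^{\sharp} = \U + \mathbb{C}I$ introduced in Section 2.

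The key step is to extend $\phi$ to a linear map $\tilde\phi : \U^{\sharp}\to\U^{\sharp}$ by declaring $\tilde\phi(A+\alpha I) := \phi(A)$. Since $I\notin\U$, the decomposition of each element of $\U^{\sharp}$ as $A+\alpha I$ is unique, so $\tilde\phi$ is well-defined and linear. I then verify that $\tilde\phi$ is $n$-commuting on $\U^{\sharp}$: for any $B = A+\alpha I$, the fact that $[X,\alpha I]=0$ for every $X$ together with linearity of the commutator gives, by an easy induction on $n$, the identity
\[
p_n(\tilde\phi(B),B,\ldots,B) = p_n(\phi(A),A+\alpha I,\ldots,A+\alpha I) = p_n(\phi(A),A,\ldots,A) = 0,
\]
where the last equality uses that $\phi$ is $n$-commuting on $\U$.

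Since $\U^{\sharp}$ is a unital standard operator algebra over $\mathcal{X}$ with $\dim\mathcal{X}\ge 2$, Lemma~\ref{n-comuting standard} applies to $\tilde\phi$ and yields $\lambda\in\mathbb{C}$ and a linear map $\mu : \U^{\sharp}\to\mathbb{C}$ with $\tilde\phi(B) = \lambda B + \mu(B)I$ for all $B\in\U^{\sharp}$. Restricting to $\U$, we get $\phi(A) = \lambda A + \mu(A) I$ for every $A\in\U$. Here is where the hypothesis $I\notin\U$ delivers the conclusion: for any $A\in\U$ we have $\mu(A)I = \phi(A) - \lambda A \in \U$, and since $I\notin\U$ this forces $\mu(A)=0$. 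Hence $\phi(A) = \lambda A$ for all $A\in\U$, establishing (iii).

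The only potentially delicate point is the inductive identity $p_n(X,A+\alpha I,\ldots,A+\alpha I)=p_n(X,A,\ldots,A)$, but this is just the fact that scalar multiples of the identity commute with everything, so there is no real obstacle. The entire proof is essentially a two-line application of the unitization trick once one notices that the centre of $\U^{\sharp}$ acts as a harmless pad for the $n$-commuting property but cannot absorb the scalar $\mu(A)$ back into $\U$ when $I$ is absent.
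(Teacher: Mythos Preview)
Your proof is correct and follows essentially the same route as the paper: extend $\phi$ to the unitization $\U^{\sharp}$, verify the extension is $n$-commuting because scalar multiples of $I$ drop out of iterated commutators, apply Lemma~\ref{n-comuting standard}, and then use $I\notin\U$ to kill the $\mu$-term. The only cosmetic difference is that the paper writes the final step for a general element $A+\gamma I$ before specializing, while you restrict to $\U$ immediately; both arrive at the same conclusion.
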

\begin{proof}
 (i)$\Rightarrow$(ii) and (iii)$\Rightarrow$(i) are clear.
 \par 
 (ii)$\Rightarrow$(iii): Let $\U^{\sharp}$ be the unitization of $\U$. Hence $\U^{\sharp}$ is a unital standard operator algebra. We extend $\phi$ to the linear mapping $\hat{\phi}:\U^{\sharp}\rightarrow \U^{\sharp}$ with the following definition
 \[\hat{\phi}(A+\gamma I):= \phi(A) \quad (A+\gamma I\in \U^{\sharp}).\]
 By the fact that $\phi$ is an $n$-commuting map, for any $A+\gamma I\in \U^{\sharp}$ and $n\geq 2$ we have
 \begin{equation*}
\begin{split}
p_{n}(\hat{\phi}(A+\gamma I),A+\gamma I,\cdots , A+\gamma I  ) & =p_{n-1}([\hat{\phi}(A+\gamma I),A+\gamma I],\cdots ,  A+\gamma I  )\\&
= p_{n-1}([\phi(A),A+\gamma I],\cdots ,  A+\gamma I  )\\&
= p_{n-1}([\phi(A),A],A+\gamma I ,\cdots ,  A+\gamma I )\\&
= p_{n-1}([\phi(A),A],A ,\cdots ,  A+\gamma I  )\\&
\, \, \,\vdots \\&
= p_{n-1}([\phi(A),A],A ,\cdots ,  A ) )\\&
=p_{n}(\phi(A),A,\cdots ,  A  )\\&
=0.
\end{split}
\end{equation*}
So $\hat{\phi}$ is an $n$-commuting map on $\U^{\sharp}$, and it follows from Lemma \ref{n-comuting standard} that there is an element $\lambda \in \mathbb{C}$ and a linear map $\mu :\U^{\sharp} \rightarrow \mathbb{C}$ such that $\hat{\phi}(A+\gamma I)=\lambda (A+\gamma I) +\mu (A+\gamma I)I$ for all $A+\gamma I\in \U^{\sharp}$. Therefore $\phi(A)=\lambda A+(\lambda\gamma  +\mu (A+\gamma I))I$. Since $\phi(A)\in \U$, it follows that $(\lambda\gamma  +\mu (A+\gamma I))I\in \U$. Considering that $\U $ is non-unital, we see that $\lambda\gamma  +\mu (A+\gamma I)=0$, and hence $\phi(A)=\lambda A$ for all $A\in \U$. The proof is complete.
\end{proof}
We mention the above theorem for $n=2$ separately in the following corollary.
\begin{cor}
Let $\U$ be a standard operator algebra over a infinite dimensional complex Banach space $\mathcal{X}$ without the identity operator $I$. Assume that $\phi :\U \rightarrow \U$ is a linear map. The following are equivalent:
\begin{itemize}
\item[(i)] $\phi$ is a Lie centralizer;
\item[(ii)] $\phi$ is a commuting map;
\item[(iii)] There exist an element $\lambda \in \mathbb{C}$ such that $\phi(A)=\lambda A $ for all $A\in \U$.
\end{itemize}
\end{cor}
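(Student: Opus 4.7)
The plan is to observe that the corollary is the specialization of Theorem~\ref{n-lie non-unital} to the case $n=2$, and simply translate the three equivalent conditions into the $n=2$ language.

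First, I would recall two definitional identifications already made in the paper: by the definition of the $(n-1)$-th commutator we have $p_2(a_1,a_2)=[a_1,a_2]$, so a linear map $\phi:\U\to\U$ is a Lie centralizer if and only if it is a Lie $2$-centralizer; likewise, $p_2(\varphi(a),a)=[\varphi(a),a]$, so $\varphi$ is a commuting map if and only if it is a $2$-commuting map. With these identifications, condition (i) of the corollary is precisely condition (i) of Theorem~\ref{n-lie non-unital} at $n=2$, and condition (ii) of the corollary matches condition (ii) of that theorem.

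Next, condition (iii) of the corollary is literally condition (iii) of Theorem~\ref{n-lie non-unital} (it does not depend on $n$ at all). Therefore invoking Theorem~\ref{n-lie non-unital} with $n=2$ yields the equivalence (i)$\Leftrightarrow$(ii)$\Leftrightarrow$(iii) without any further work. There is no real obstacle here: once Theorem~\ref{n-lie non-unital} is in hand, the corollary is a pure specialization, and the proof can be written in essentially one line referring back to that theorem.
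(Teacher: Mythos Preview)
Your proposal is correct and matches the paper's approach exactly: the paper presents this corollary immediately after Theorem~\ref{n-lie non-unital} with the remark that it is simply that theorem stated for $n=2$, and gives no separate proof. Your identification of the $n=2$ case of the definitions (Lie $2$-centralizer $=$ Lie centralizer, $2$-commuting $=$ commuting) is precisely the one-line justification the paper leaves implicit.
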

%%_____________________________________________________________________________________________________
%%_____________________________________________________________________________________________________
 % ------------------------------------------------------------------------
%%______________________________________________________________________________
%%******************************************************************************************
%%&&&&&&&&&&&&&&*****************************************************************************&&&&&&&&&&&
 % ------------------------------------------------------------------------
%%______________________________________________________________________________
%%******************************************************************************************
%%&&&&&&&&&&&&&&*****************************************************************************&&&&&&&&&&&
%%_____________________________________________________________________________________________________________________
%------------------------------------------------------------------------------------%
\subsection*{Declarations}
\begin{itemize}
\item[•] All authors contributed to the study conception and design and approved the final manuscript. 
\item[•] The authors declare that no funds, grants, or other support were received during the preparation of this manuscript.
\item[•] The authors have no relevant financial or non-financial interests to disclose.
\item[•] Data sharing not applicable to this article as no datasets were generated or analysed during the current study. 
\end{itemize}

\subsection*{Acknowledgment}
The author thanks the referee(s) for careful reading of the manuscript and for helpful suggestions.
%%___________________________________________________________________________________________________________
 % ------------------------------------------------------------------------
%GATHER{Xbib.bib} % For Gather Purpose Only
%GATHER{Paper.bbl} % For Gather Purpose Only
\bibliographystyle{amsplain}
\bibliography{xbib}

\end{document}